\def\multiset#1#2{\ensuremath{\left(\kern-.3em\left(\genfrac{}{}{0pt}{}{#1}{#2}\right)\kern-.3em\right)}}
\def\oversortoftilde#1{\mathop{\vbox{\m@th\ialign{##\crcr\noalign{\kern3\p@}%
      \sortoftildefill\crcr\noalign{\kern3\p@\nointerlineskip}%
      $\hfil\displaystyle{#1}\hfil$\crcr}}}\limits}
\def\sortoftildefill{$\m@th \setbox\z@\hbox{$\braceld$}%
  \braceld\leaders\vrule \@height\ht\z@ \@depth\z@\hfill\braceru$}
  \newcommand{\reg}{\operatorname{reg}}
 \newcommand{\ex}{\operatorname{end}}
 \newcommand{\grade}{\operatorname{grade}}
 \newcommand{\depth}{\operatorname{depth}}
  \newcommand{\gr}{\operatorname{gr}}
\newcommand{\proset}{\,\mathrel{\lower 4pt\hbox{$\scriptscriptstyle/$}
\mkern -14mu\subseteq }\,} 
 \newtheorem{theorem}{Theorem}[section]
 \newtheorem{corollary}[theorem]{Corollary}
 \newtheorem{lemma}[theorem]{Lemma}
 \newtheorem{proposition}[theorem]{Proposition}
 \newtheorem{question}[theorem]{Question}
\newtheorem{notation}[theorem]{Notation}
\newtheorem{observation}[theorem]{Observation}
 \theoremstyle{definition}
 \newtheorem{remark}[theorem]{Remark}
 \newtheorem{definition}[theorem]{Definition}
 \newtheorem{example}[theorem]{Example}
\newcommand{\n}{\operatorname{n}}
\title[Ratliff-rush filtration, reduction number and postulation number ] {A note on ratliff-rush filtration, reduction number and postulation number of $\mathfrak m$-primary ideals }
\author{Mousumi Mandal and Shruti Priya }
\thanks{AMS Classification 2010: 13H10, 13D40, 13A30.}
\thanks{Key words and phrases: Cohen-Macaulay local rings, reduction number, postulation number, superficial elements, stability index, Ratliff-Rush filtration, surjectivity index.}
\address{Department of Mathematics, Indian Institute of Technology Kharagpur, 721302, India} \email{mousumi@maths.iitkgp.ac.in}
\address{Department of Mathematics, Indian Institute of Technology Kharagpur, 721302, India} \email{shruti96312@kgpian.iitkgp.ac.in}
\begin{document}




\begin{abstract}

     Let $(R,\mathfrak m)$ be a Cohen-Macaulay local ring of dimension $d\geq 2,$ and $I$ an $\mathfrak m$-primary ideal. Let $r(I)$ be the reduction number of $I$, $\n(I)$ the postulation number and $\rho(I)$ the stability index of the Ratliff-Rush filtration with respect to $I$. We prove that for $d=2,$ if $\n(I)=\rho(I)-1,$ then $r(I) \leq \n(I)+2,$ and if $\n(I) \neq \rho(I)-1,$ then $r(I) \geq \n(I)+2.$  
   For $d \geq 3$, assuming $I$ is integrally closed, $\depth \gr(I) = d-2,$ and $\n(I)=-(d-3),$ we prove that $r(I) \geq \n(I)+d$.    Our main result generalizes a result by  Marley on the relation between the Hilbert-Samuel function and the Hilbert-Samuel polynomial by relaxing the condition on the depth of the associated graded ring to the good behaviour of the Ratliff-Rush filtration with respect to $I$ mod a superficial sequence. From this result, it follows that  for  Cohen-Macaulay local rings of dimension $d\geq2$, if $P_{I}(k)=H_{I}(k)$ for some $k \geq \rho(I)$, then  $P_{I}(n)=H_{I}(n)$ for all $n \geq k.$

\end{abstract}

\maketitle

   \section{INTRODUCTION}

 Let $(R, \mathfrak m)$ be a Cohen-Macaulay local ring of dimension $d$  with an infinite residue field, and $I$ be an $\mathfrak m$-primary ideal. Samuel proved that for sufficiently  large $n$, the \textit{Hilbert-Samuel function} $H_{I}(n)= \lambda(R/I^{n})$ (where $\lambda()$ denotes the length) of an $\mathfrak m$-primary ideal $I$ is a polynomial $P_{I}(n)$, known as the \textit{Hilbert-Samuel polynomial.} Since $P_{I}(n)$ takes only numerical values for every integer $n$, by the theory on numerical polynomials, we write

\begin{center}
    $P_{I}(n)=e_{0}(I)\dbinom{n+d-1}{d}-e_{1}(I)\dbinom{n+d-2}{d-1}+\ldots+(-1)^{d}e_{d}(I).$
\end{center}

Here, $e_{0}(I), e_{1}(I), \ldots, e_{d}(I)$ are unique integers known as  the \textit{Hilbert coefficients.} The leading coefficient $e_{0}(I)$ of the polynomial $P_{I}(n)$ is called the \textit{multiplicity} of the ideal $I$ and is denoted by $e(I).$  Hilbert coefficients can be used to characterize the properties of the ring $R$, the ideal $I,$ and the blow-up algebras associated with $I$. 

Marley revealed an interesting connection between the Hilbert coefficients and the reduction number of an ideal $I$. Recall that a reduction of $I$ is an ideal $J \subseteq I$ such that $JI^{n}=I^{n+1}$ for some $n \in \mathbb{N.}$ 
 If $J$ is a reduction of $I$, the \textit{reduction number} of $I$ with respect to $J,$ denoted by  $r_{J}(I),$ is the smallest $n$ such that $JI^{n}=I^{n+1}.$
A reduction is  \textit{minimal} if it is minimal with respect to containment among all reductions. The \textit{reduction number} of $I$ is defined  as
$r(I):=\min\{r_{J}(I): J$ is a minimal reduction of $I\}$. The concept of reduction and minimal reduction was first introduced by Northcott and Rees. Reduction number is useful in studying the Cohen-Macaulay properties of the associated graded ring $\gr(I)= \displaystyle \bigoplus_{n \geq 0} I^{n} / I^{n+1}$ of $R$ with respect to $I.$ 
The reduction number $r(I)$ is said to be independent if $r(I)=r_{J}(I)$ for every minimal reduction $J$ of $I$. In one-dimensional rings, the reduction number $r(I)$ of an $\mathfrak m$-primary ideal $I$ is independent. It was independently proved by Huckaba \cite{hu} and Trung \cite{trung} that if $\depth\gr(I) \geq d-1$, then $r(I)$ is independent. Sally proposed the following question in \cite{sally}: If $(R, \mathfrak m)$ is a Cohen-Macaulay ring of dimension $d$, then is $r(\mathfrak m)$ independent? As a natural extension to this question, one can ask whether $r(I)$ is independent for any $\mathfrak m$-primary ideal $I.$  There are examples that suggest that $r(I)$ is not independent in general. In this paper, we investigate the conditions under which $r(I)$ is independent (see \ref{bb}, \ref{papa}, \ref{purvi}). 

 Ooishi  \cite[Proposition 4.10]{oo} introduced another invariant, known as  the postulation number of $I,$  defined as $\n(I):= \min\{n \in \mathbb{Z} : P_{I}(t)=H_{I}(t)$ for $t > n\}$.  He proved that if $\dim R=1$, then $r(I)=\n(I)+1.$ Marley \cite[Theorem 2.15]{tjm} proved that if $\depth \gr(I)\geq d-1$, then $r(I)=\n(I)+d.$ Much of this paper is inspired by the following question proposed by Marley \cite[Question 2.1]{tjm}: 

\begin{question} \label{csk}
Let $(R, \mathfrak m)$ be a Cohen-Macaulay local ring of dimension $d$   with an infinite residue field  and $I$  an $\mathfrak m$-primary ideal. Is it always the case that $r(I) \leq \n(I)+d$?
\end{question}

In this paper, we  answer the above question raised by Marley for two-dimensional Cohen-Macaulay local rings.  We observe that the stability index of the Ratliff-Rush filtration with respect to $I$, denoted by $\rho(I)$, plays an important role in understanding the relation between $r(I)$ and $\n(I).$ Recall that the \textit{Ratliff-Rush filtration} of $I$, is the filtration $\mathcal{F}=\{\widetilde{I^{n}}\}_{n \geq 0},$ such that $\widetilde{I^{n}}=I^{n}$ for sufficiently large $n.$ The \textit{stability index} of the Ratliff-Rush filtration with respect to $I$ is defined as:
\begin{center}
    $\rho(I):=\min\{i \geq 1:\widetilde{I^{n}}=I^{n}$ for all $n \geq i \}$.
\end{center}

We prove the following result.

\begin{theorem}
    Let $(R, \mathfrak m)$ be a two dimensional Cohen-Macaulay local ring, and $I$ an $\mathfrak m$-primary ideal.  If $\n(I) \neq \rho(I)-1 $, then $r(I) \geq \n(I)+2.$ But if  $\n(I) = \rho(I)-1 $, then $r(I) \leq \n(I)+2$.
\end{theorem}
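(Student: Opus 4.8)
The plan is to replace the $I$-adic filtration by the Ratliff--Rush filtration $\mathcal{F}=\{\widetilde{I^{n}}\}_{n\ge 0}$, whose associated graded ring always has positive depth, and to transport everything to dimension one through Ooishi's identity $\rd=\n+1$. Write $\rho=\rho(I)$. First I would dispose of the case $\rho=1$: then $\depth\gr(I)\ge 1$, so Marley's theorem \cite[Theorem 2.15]{tjm} already gives $\rd(I)=\n(I)+2$, which contains both assertions. So assume $\rho\ge 2$.

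\emph{Step 1: the Ratliff--Rush data.} Since $R$ is Cohen--Macaulay of dimension $2$, $\mathcal{F}$ is a good $I$-filtration, $\gr_{\mathcal{F}}(R)=\bigoplus_{n\ge 0}\widetilde{I^{n}}/\widetilde{I^{n+1}}$ has depth $\ge 1=d-1$, and by minimality of $\rho$ one has $\widetilde{I^{n}}=I^{n}$ for $n\ge\rho$ while $\widetilde{I^{\rho-1}}\supsetneq I^{\rho-1}$. Put $\widetilde{H}(n)=\lambda(R/\widetilde{I^{n}})$, let $\widetilde{\n}$ be the postulation number of $\mathcal{F}$, and for a minimal reduction $J$ of $I$ let $\widetilde{r}_{J}$ be the least integer with $\widetilde{I^{m+1}}=J\widetilde{I^{m}}$ for $m\ge\widetilde{r}_{J}$. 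The filtered analogue of the theorems of Huckaba--Trung and Marley --- that $\widetilde{r}_{J}$ is independent of $J$ (call it $\widetilde{r}$) and that $\widetilde{r}=\widetilde{\n}+2$ --- follows, exactly as in the classical case, by reducing $\mathcal{F}$ modulo a superficial element down to $S=R/(x)$ and invoking Ooishi's identity there; this is where the data $x$, $S$, $I'=I/(x)$ are used, via $(I^{n+1}:x)\subseteq\widetilde{I^{n}}$ (whence $(I^{n+1}:x)=I^{n}$ for $n\ge\rho$ and $x$ passes the Ratliff--Rush filtration down to $S$), via the fact that a minimal reduction of $I'$ lifts through $x$ to a minimal reduction of $I$, and via $\dim S=1$. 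Finally, comparing the two filtrations degree by degree (using $\widetilde{I^{n}}=I^{n}$ for $n\ge\rho$ and $I^{m+1}=JI^{m}$ for $m\gg 0$), I would record the elementary inequalities
\[
\rd(I)\le\max\{\widetilde{r},\rho\}\qquad\text{and}\qquad\widetilde{r}\le\max\{\rd(I),\rho\}.
\]

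\emph{Step 2: locating $\widetilde{\n}$ and concluding.} Since $\widetilde{H}(n)=H_{I}(n)$ for $n\ge\rho$, while $\widetilde{I^{\rho-1}}\supsetneq I^{\rho-1}$ forces $\widetilde{H}(\rho-1)<H_{I}(\rho-1)$, a short comparison with $P_{I}$ yields: $\widetilde{\n}=\n(I)$ if $\n(I)\ge\rho$; $\widetilde{\n}\le\n(I)$ if $\n(I)=\rho-1$; and $\widetilde{\n}=\rho-1$ if $\n(I)\le\rho-2$. Now, if $\n(I)=\rho-1$, then $\widetilde{r}=\widetilde{\n}+2\le\n(I)+2=\rho+1$, hence $\rd(I)\le\max\{\widetilde{r},\rho\}\le\rho+1=\n(I)+2$, the asserted upper bound. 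If $\n(I)\ne\rho-1$: when $\n(I)\ge\rho$ one has $\widetilde{r}=\n(I)+2>\rho$, so $\widetilde{r}\le\max\{\rd(I),\rho\}$ forces $\rd(I)\ge\widetilde{r}=\n(I)+2$; when $\n(I)\le\rho-2$ one has $\widetilde{r}=\rho+1>\rho$, so again $\rd(I)\ge\widetilde{r}=\rho+1\ge\n(I)+2$. In either sub-case $\rd(I)\ge\n(I)+2$.

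The hard part will be the filtered Marley identity $\widetilde{r}=\widetilde{\n}+2$ of Step 1: one must carry the Ratliff--Rush filtration across the superficial element into the one-dimensional ring $S$, apply Ooishi's theorem there, and verify that the reduction number, the postulation number and the lengths $\lambda(\widetilde{I^{n}}/I^{n})$ all transfer coherently. This is exactly the ``good behaviour of the Ratliff--Rush filtration modulo a superficial element'' alluded to in the introduction, and it comes for free here only because $d=2$ makes $S$ one-dimensional; for $d\ge 3$ it has to be imposed as a hypothesis.
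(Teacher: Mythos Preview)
Your argument is correct and takes a genuinely different route from the paper. The paper works through the one-dimensional quotient $I'=I/(x)$ directly: it first establishes a trichotomy (Proposition~\ref{riya}) comparing $\n(I')$ with $\n(I)+1$ according as $\n(I)$ is larger than, equal to, or smaller than $\rho(I)-1$, and then combines Ooishi's one-dimensional identity $\rd(I')=\n(I')+1$ with $\rd(I')\le\rd(I)$ for the lower bound; for the upper bound it invokes a lemma of Mandal--Saloni \cite[Lemma~3.1]{ms} (roughly: if $\rd_{J'}(I')\le k<\rd_J(I)$ then $\widetilde{I^{k}}\ne I^{k}$) to force $\rd_J(I)\le\rho(I)+1$. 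Your route instead replaces the $I$-adic filtration by $\mathcal F=\{\widetilde{I^n}\}$, exploits the automatic inequality $\depth\gr_{\mathcal F}(R)\ge 1=d-1$ to obtain the filtered Huckaba--Trung/Marley identity $\widetilde r=\widetilde{\n}+2$, and then transfers back via the two elementary $\max$-inequalities. Each approach has its virtue: the paper's is more hands-on and yields Proposition~\ref{riya} as a byproduct of independent interest (used again in Section~3), while yours is more structural, avoids the external lemma from \cite{ms}, and in the sub-case $\n(I)\le\rho-2$ actually gives the sharper conclusion $\rd(I)\ge\rho(I)+1$.

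One correction to your closing diagnosis: you conflate two distinct issues. The filtered Marley identity $\widetilde r=\widetilde{\n}+2$ requires only $\depth\gr_{\mathcal F}(R)\ge d-1$, and this is automatic here because any $I$-superficial nonzerodivisor $x$ satisfies $(\widetilde{I^{n+1}}:x)=\widetilde{I^{n}}$ for all $n\ge 1$; the quotient filtration $\{(\widetilde{I^n}+(x))/(x)\}$ on $S$ is then a Hilbert filtration to which the one-dimensional filtered Ooishi theorem applies, regardless of whether it coincides with $\{\widetilde{I'^{\,n}}\}$. The ``good behaviour'' hypothesis $\pi(\widetilde{I^{n}})=\widetilde{I'^{\,n}}$ is a separate and genuinely stronger condition, the one featuring in Sections~4--5 of the paper; you do \emph{not} need it here. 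So the ``hard part'' is simply the filtered Marley theorem itself, which is available in the literature (e.g.\ \cite{rv}), and your proof stands once that is granted.
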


It is worth examining whether we can extend Theorem 1.2 for higher-dimensional Cohen-Macaulay local rings. The main difficulty in generalizing this result is that there is no simple relation between $\rho(I)$ and $\rho\left(I/(x)\right),$ where $x $ is a superficial element for $I$.  Under some assumptions on the depth of the associated graded ring, we prove the following result.

\begin{proposition} 
    Let $(R, \mathfrak{m})$ be a   Cohen-Macaulay local ring of dimension $d \geq 3$ and $I$ an $\mathfrak{m}$-primary ideal such that $I$ is integrally closed in $R$ and $\depth \gr(I)= d-2.$ If $\n(I)=-(d-3),$ then $r(I)  \geq \n(I)+d.$ 
\end{proposition}

Marley raised another intriguing question  \cite[Question 2.2]{tjm} regarding the existence of a minimal reduction $J$ of $I$. He proposed that:

\begin{question}
    Let $(R,\mathfrak m)$ be a Cohen-Macaulay local ring of dimension $d$ with an infinite residue field and $I$ an $\mathfrak m$-primary ideal. Does there always exist a minimal reduction $J$ of $I$ such that $r_{J}(I)=\n(I)+d?$
\end{question}

We provide an example (see Example \ref{komal}) in a two-dimensional Cohen-Macaulay local ring where the above situation does not occur. This gives a negative answer to the above question.

If $\underline{\mathbf{x}}=x_{1}, x_{2},\ldots,x_{d-1}$ is a superficial sequence for $I$ (see Definition \ref{cute}), we aim to comprehend the relation between $\rho(I),$ $\rho(I/(x_{1})),$ $\ldots,$ $\rho(I/(\underline{\mathbf{x}}))$.
 If the Ratliff-Rush filtration with respect to $I$ behaves well mod a superficial sequence $\underline{\mathbf{x}}$ (see Definition \ref{justin}), then $\rho(I) \geq \rho(I/(x_{1})) \geq \ldots \geq \rho(I/(\underline{\mathbf{x}}))$ (see Lemma \ref{fd}). This provides a necessary condition for the Ratliff-Rush filtration with respect to $I$ to behave well mod a superficial sequence. 
We further observe that, if $x$ is a superficial element for $I,$ then for sufficiently large $n,$  $\frac{I^{n}+(x)}{(x)}=\frac{\widetilde{I^{n}}+(x)}{(x)}=\widetilde{\frac{I^{n}+(x)}{(x)}}$. This fact  is closely related to the surjectivity of the map  $\pi^{n}_{0,1}$ (see Section 4). Set $\omega(I):=\min \big \{k \in \mathbb N : \pi^{n}_{0,1}$ is surjective for all $n \geq k \big\}.$ We call this number the \textit{surjectivity index} of the Ratliff-Rush filtration with respect to $I.$ Note that if $\omega(I)=1,$ then the Ratliff-Rush filtration with respect to $I$ behaves well mod a superficial element. It is of some interest to find an upper bound for $\omega(I)$ (see Proposition \ref{ta}).

Northcott in \cite{no}  investigated the relation between the Hilbert-Samuel polynomial and the Hilbert-Samuel function for one-dimensional Cohen-Macaulay rings  and  proved that  $P_{I}(n+1)-H_{I}(n+1) \geq P_{I}(n)-H_{I}(n)$ for all $n.$
However, there are examples that illustrate that when the dimension of the ring is greater than one, the values of $P_{I}(n)-H_{I}(n)$ do not easily correlate with one another. Marley extended this result for higher dimensions in \cite[Theorem 2.4]{tjm}, and proved that if  $(R, \mathfrak m)$ is a Cohen-Macaulay local ring of dimension $d$ and $I$ an $\mathfrak m$-primary ideal such that $\depth \gr(I) \geq d-1,$ then for $0 \leq i \leq d$ and for all $n \in \mathbb{Z}$,
     $(-1)^{d-i} \Delta^{i}(P_{I}(n)-H_{I}(n)) \geq 0.$

We  observe that the assumption on the depth of the associated graded ring can be relaxed to a weaker condition on the good behavior of the Ratliff-Rush filtration with respect to $I$ mod a superficial sequence.  We prove the following result.

\begin{theorem} 
      Let $(R, \mathfrak m)$ be a Cohen-Macaulay local ring of dimension $d \geq 2,$ $I$ an $\mathfrak m$-primary ideal. Suppose that the Ratliff-Rush filtration with respect to $I$ behaves well mod a superficial sequence $x_{1},x_{2},\ldots,x_{d-2} \in I.$ Then for all $0 \leq i \leq d$ and for all  $n \geq \rho(I)$
\begin{center}
          $(-1)^{d-i} \Delta^{i}(P_{I}(n)-H_{I}(n)) \geq 0.$
      \end{center}
 \end{theorem}

As a corollary, we partially answer (see  Corollary \ref{fg}) the following question proposed by Marley \cite[Question 2.3]{tjm} in the affirmative.

  \begin{question} \label{dc}
      Let $(R, \mathfrak m)$ be a Cohen-Macaulay local ring and $I$ an $\mathfrak m$-primary ideal. Suppose $P_{I}(k)=H_{I}(k)$ for some $k \geq 1$. Then is necessarily $\n(I) <k?$
  \end{question}

This paper is organized into four sections. Definitions, introductory ideas, and notations are all covered in Section 2.    In Section 3, we  establish the relation between the postulation numbers of $I$ and  $I/(x)$, where $x $ is a superficial element for $I.$ We further prove Theorem 1.2 and its consequences related to the independence of reduction number. In Section 4,  we prove Theorem 1.5 and  partially answer Marley's Question 1.6 for  $d$-dimensional Cohen-Macaulay local rings. The techniques used in Section 4  are inspired by Puthenpurakal \cite{tjp,tjp2}. We provide examples in support of our claims.

  \section{PRELIMINARIES}

In this section, we recall the basic definitions, and preliminaries, we also introduce some notations which will be used throughout this paper. Throughout, $(R, \mathfrak m)$ will be a Cohen-Macaulay local ring with an infinite residue field, and $I$ an $\mathfrak m$-primary ideal of $R$. The Krull dimension of the ring $R$, denoted by $d$,  will always assumed to be positive.

\begin{definition}
    \normalfont Consider the increasing chain of ideals in $R$
\begin{center}
    $I\subseteq(I^{2}:I)\subseteq(I^{3}:I^{2})\subseteq\ldots\subseteq(I^{n+1}:I^{n})\subseteq\ldots$
\end{center}
Since, $R$ is Noetherian, this chain stabilizes to an ideal $\widetilde{I}=\displaystyle \bigcup_{n \geq 1}(I^{n+1}:I^{n}).$ This ideal $\widetilde{I}$ is known as the \textit{Ratliff-Rush closure} of $I$ or \textit{Ratliff-Rush ideal} associated with $I.$ An ideal $I$ is called \textit{Ratliff-Rush closed} if $\widetilde{I}=I.$ If $\grade I>0$, then $\widetilde{I}$ is the largest unique ideal containing $I,$ which has the same Hilbert polynomial as that of $I.$

\end{definition}

\begin{definition} \label{cute}
    \normalfont An element $x\in I \backslash I^2$ is called a \textit{superficial element} for $I,$ if there exist a positive integer $c$ such that $(I^{n+1}:x) \cap I^{c}=I^{n}$ for all $n \geq c.$ A sequence $x_{1},x_{2},\ldots,x_{j}$ is  a \textit{superficial sequence} for $I,$ if $x_{1}$ is a superficial element for $I$, and $x_{i}$ is a superficial element for $I/(x_{1},x_{2},\ldots,x_{i-1})$ for all $i=2,3,\ldots,j.$ 
    
    If $x $ is a superficial element for $I$, then $(I^{n+1}:x)=I^{n}$ for sufficiently large $n.$    Let $x$ be any superficial element for $I,$  the \textit{stability index} for $x$ is defined as:
    \begin{center}
         $\rho_{x}(I):=\min\{\text{$k \in \mathbb{N}: (I^{n+1}:x)=I^{n}$ 
  for all $n \geq k$}\}.$
     \end{center}
     In \cite[Corollary 2.7]{tjp}, Puthenpurakal  proved that $\rho(I)=\rho_{x}(I)$, and thus $\rho_{x}(I)$ is independent of the choice of the superficial element. 
\end{definition}

The following lemma gives a relation between the Hilbert-Samuel functions and the Hilbert-Samuel polynomials of $I$ and $I/(x),$ where $x$ is a superficial element for $I$. We remark that the proof is similar to the proof \cite[Lemma 2.3]{yw}, but we include the steps for completeness.

 \begin{lemma} \cite[Lemma 2.3]{yw} \label{pushpa}
     Let $x \in I$ be a superficial element for $I$ and $I'=I/(x)$.  Then for all $n \in \mathbb{Z},$
     \begin{enumerate}[(i)]
         \item $H_{I'}(n) \geq  H_{I}(n)-H_{I}(n-1)$ and
         \item $ P_{I'}(n) =  P_{I}(n)-P_{I}(n-1)$
  
     \end{enumerate}
 \end{lemma}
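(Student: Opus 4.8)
The plan is to extract both statements from a single four-term exact sequence produced by multiplication by $x$. Fix $n \in \mathbb{Z}$; for $n \leq 0$ both sides of (1) and (2) vanish under the usual conventions, so assume $n \geq 1$. Since $x \in I$ we have $xI^{n-1} \subseteq I^{n}$, so multiplication by $x$ descends to a well-defined $R$-linear map $\mu_{n} \colon R/I^{n-1} \to R/I^{n}$; its kernel is $(I^{n}:x)/I^{n-1}$ and its cokernel is $R/(I^{n}+xR)$. Using $I'^{\,n} = (I^{n}+(x))/(x)$ and the third isomorphism theorem, $R/(I^{n}+xR) \cong S/I'^{\,n}$, so I obtain the exact sequence of finite-length $R$-modules
\[
0 \longrightarrow \frac{I^{n}:x}{I^{n-1}} \longrightarrow \frac{R}{I^{n-1}} \xrightarrow{\ \mu_{n}\ } \frac{R}{I^{n}} \longrightarrow \frac{S}{I'^{\,n}} \longrightarrow 0 .
\]

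Additivity of length along this sequence gives $H_{I'}(n) = H_{I}(n) - H_{I}(n-1) + \lambda\!\left((I^{n}:x)/I^{n-1}\right)$. Since $x \in I$ forces $I^{n-1} \subseteq (I^{n}:x)$, the correction term is nonnegative, and (1) follows at once. For (2) I invoke the defining property of a superficial element recalled in the excerpt: $(I^{n}:x) = I^{n-1}$ for all $n \gg 0$, so the correction term vanishes eventually and $H_{I'}(n) = H_{I}(n) - H_{I}(n-1)$ for $n \gg 0$. Replacing each Hilbert--Samuel function by its Hilbert--Samuel polynomial, which is legitimate for $n \gg 0$, yields $P_{I'}(n) = P_{I}(n) - P_{I}(n-1)$ for all $n \gg 0$; as two numerical polynomials that agree at infinitely many integers must be identical, this identity holds for every $n \in \mathbb{Z}$.

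I do not expect a genuine obstacle here: the argument is essentially formal once the displayed sequence is in place. The only points needing a moment's care are the identification of $\coker\mu_{n}$ with $S/I'^{\,n}$ and the passage from equality for $n \gg 0$ to equality of polynomials. It is worth noting that this proof uses only $x \in I$ for part (1) and the superficiality of $x$ for part (2); in particular it needs neither that $x$ be a nonzerodivisor nor any hypothesis on $\depth \gr(I)$, which is presumably why the statement can serve as a general-purpose tool in the later sections.
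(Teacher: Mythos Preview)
Your proof is correct and follows essentially the same approach as the paper's: both extract the inequality from a four-term exact sequence induced by multiplication by $x$ and then use superficiality to pass to the polynomial identity. The only cosmetic difference is that the paper uses the endomorphism $R/I^{n+1}\xrightarrow{\,x\,} R/I^{n+1}$ (with kernel $(I^{n+1}:x)/I^{n+1}$, which it then decomposes over $I^{n}$), whereas you use the shifted map $R/I^{n-1}\xrightarrow{\,x\,} R/I^{n}$ directly---this is precisely the sequence the paper itself invokes later as the ``second fundamental exact sequence.''
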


 \begin{proof} Set $S=R/(x)$ and $I'=I/(x).$
 Let us consider the following exact sequence: \begin{equation} \label{1}
     0 \longrightarrow (I^{n+1} : x)/I^{n+1} \longrightarrow R/I^{n+1} \xlongrightarrow{x} R/I^{n+1} \longrightarrow R/(I^{n+1}+(x)) \longrightarrow 0. 
  \end{equation}

From the exact sequence (\ref{1}), we have $\lambda ((I^{n+1}:x)/I^{n+1})=\lambda (R/(I^{n+1}+(x))).$
Now
\begin{equation} \label{2}
    \begin{split}
    H_{I'}(n+1) & = \lambda (S/I'^{n+1})\\
 & = \lambda (R/(I^{n+1}+(x))) \\
 & = \lambda ((I^{n+1}:x)/I^{n+1})\\
 & = \lambda ((I^{n+1}:x)/I^{n})+ \lambda (I^{n}/I^{n+1})\\
 & \geq  \lambda (I^{n}/I^{n+1})\\
 & = \lambda (R/I^{n+1})-\lambda (R/I^{n})\\
 & = \text{$H_{I}(n+1)-H_{I}(n)$ for all $n$}  
\end{split}
\end{equation}

 Therefore, 
 \text{$H_{I'}(n) \geq  H_{I}(n)-H_{I}(n-1)$
 for all $n$.}  From Definition \ref{cute}, we have $\lambda((I^{n+1}:x)/I^{n})=0$ for sufficiently large $n.$
Thus, part $(ii)$ follows immediately from $(i)$.
\end{proof}

For future references, we recall the following theorem:
\begin{theorem} \cite[Theorem 4.3]{lth2} \label{hoa} Let $(R, \mathfrak m)$ be a two dimensional Cohen-Macaulay local ring and $I$ an $\mathfrak m$-primary ideal. Suppose that $P_{I}(n_{0})=H_{I}(n_{0})$ for some $n_{0} \geq e_{2}(I)+1.$ Then $n(I) < n_{0}.$
\end{theorem}

\begin{notation}
\normalfont
\textbf{I.} For a graded module $M=
\displaystyle \bigoplus_{n \geq 0}M_{n}$,  over a graded ring $A=
\displaystyle \bigoplus_{n \geq 0}A_{n}$, $M_{n}$ denotes its $n$-th graded piece and $M(-j)$ denotes the graded module $M$ shifted  by degree $j$.

\textbf{II.} The \textit{Rees Ring} of $I$ is  $\mathcal{R}(I)=\displaystyle \bigoplus_{n \geq 0}I^{n}t^{n} \subseteq R[t]$, where $t$ is an indeterminate over $R$. Set $\mathcal{R}(I)_{+}=\displaystyle \bigoplus_{n \geq 1}I^{n}t^{n}$ and $\mathfrak M=\mathfrak M_{\mathcal{R}(I)}= \mathfrak m \bigoplus \mathcal{R}(I)_{+}.$

\textbf{III.}   
\end{notation}

\begin{definition}
    Let $M$ be a graded $\mathcal{R}(I)$-module. We define 
    \begin{center}
        $\ex(M)=\sup\{n \in \mathbb{Z}: M_{n} \neq 0 \}$.
    \end{center}

If $\mathfrak{I}$ is a homogeneous ideal in $\mathcal{R}(I),$ then  $\mathcal{H}^{i}_{\mathfrak{I}}(M)$ denotes the $i$-th local cohomology module of $M$ with support in $\mathfrak{I}.$  For the reference of local cohomology, we use \cite{bs}. If $M$ is a finitely generated $\mathcal{R}(I)$-module, then for each $i\geq 0$, $\left(\mathcal{H}^{i}_{\mathcal{R}(I)_{+}}(M)\right)_{n}=0$ for sufficiently large $n.$ Define $a_{i}(M)=\ex\left(\mathcal{H}^{i}_{\mathcal{R}(I)_{+}}(M)\right)$. The \textit{Castelnuovo-Mumford regularity} of $M$ is defined as
     \begin{center}
         $\reg(M)=\max\{a_{i}(M)+i: 0 \leq i \leq \dim M\}.$
     \end{center}
\end{definition}

\begin{definition}
Let $x$ be superficial for $I$,  set $S=R/(x)$ and $I'=I/(x).$ We recall the second fundamental exact sequence from   literature  \cite[6.2, 6.3]{tjp}. For each $n \geq 1,$ we have the following exact sequence of $R$-modules:
\[0\longrightarrow \frac{(I^{n+1}:x)}{I^{n}} \longrightarrow \frac{R}{I^{n}} \xlongrightarrow{\psi_{n}^{x}} \frac{R}{I^{n+1}} \longrightarrow \frac{S}{I'^{n+1}}\longrightarrow 0 ,\]
where $\psi_{n}^{x}(a+I^{n})=xa+I^{n+1}$ for all  $a \in R.$ This sequence induces an exact sequence of $\mathcal{R}(I)$-modules, called the second fundamental exact sequence:
\begin{equation}
\label{eq:u}
    0\longrightarrow \mathcal{B}(x,R) \longrightarrow L^{I}(R)(-1
) \xlongrightarrow{\psi_{x}} L^{I}(R) \longrightarrow L^{I'}(S) \longrightarrow 0,
\end{equation}
 where $\psi_{x}$ is multiplication by $x,$ and $\mathcal{B}(x,R)= \displaystyle \bigoplus_{n \geq 0} \frac{(I^{n+1}:x)}{I^{n}}.$
 Note that, $(I^{n+1}:x)=I^{n}$ for sufficiently large $n.$ Thus, $\mathcal{B}(x,R)$ has finite length. So, $\mathcal{H}^{0}_{\mathfrak M}\left(\mathcal{B}(x,R)\right)$ $=\mathcal{B}(x,R).$ The sequence in (\ref{eq:u}) induces a long exact sequence of local cohomology modules. For convenience from now onwards, we will write $\mathcal{H}^{i}_{\mathfrak M}\left(L^{I}(R)\right)=\mathcal{H}^{i}\left(L^{I}(R)\right)$ for all $i.$
\begin{equation} \label{4}
    \begin{aligned}
        0 \longrightarrow \mathcal{B}(x,R)  &  \longrightarrow \mathcal{H}^{0}\left(L^{I}(R)\right)(-1) \longrightarrow \mathcal{H}^{0}\left(L^{I}(R)\right) \longrightarrow \mathcal{H}^{0}\left(L^{I'}(S)\right) \\
     & \longrightarrow \mathcal{H}^{1}\left(L^{I}(R)\right)(-1) \longrightarrow \mathcal{H}^{1}\left(L^{I}(R)\right) \longrightarrow \mathcal{H}^{1}\left(L^{I'}(S)\right) \ldots
    \end{aligned}
\end{equation}
\end{definition}

\begin{definition}
    A graded $\mathcal{R}(I)$-module $M$ is called $*$-$Artinian,$ if every descending chain of graded submodules of $M$ terminates. From \cite[Proposition 4.4, 5.4]{tjp}, $\mathcal{H}^{i}(L^{I}(R))$ are $*$-Artinian for every $i=0,1,2,\ldots d-1.$  Define $b^{I}_{i}(R)=\ex\left(\mathcal{H}^{i}(L^{I}(R))\right)$ for all $i=0,1,2,\ldots d-1.$ From \cite[5.4]{tjp}, $b^{I}_{i}(R) \leq a_{i+1}(\gr(I))-1.$ Set $b^{I}=\max\{b^{I}_{i}(R): i= 0,1,\ldots,d-1\}.$ By \cite[Proposition 4.7]{tjp}, $\mathcal{H}^{0}\left(L^{I}(R)\right)= \displaystyle \bigoplus_{n \geq 0} \frac{\widetilde{I^{n+1}}}{I^{n+1}}.$
\end{definition}

\begin{observation} \normalfont

\textbf{I.} Note that the zeroth local  cohomology $\mathcal{H}^{0}\left(L^{I}(R)\right)$ of $L^{I}(R)$   does not have negatively graded components i.e., $b_{0}^{I}(R) \geq 0$.  From the long exact sequence of local cohomology modules (\ref{4}), we have
\begin{center}
     $ \longrightarrow \left(\mathcal{H}^{0}\left(L^{I}(R)\right)\right)_{n} \longrightarrow \left(\mathcal{H}^{0}\left(L^{I'}(S)\right)\right)_{n} \longrightarrow \left(\mathcal{H}^{1}\left(L^{I}(R)\right)\right)_{n-1} \longrightarrow \left(\mathcal{H}^{1}\left(L^{I}(R)\right)\right)_{n} $
\end{center}
When $b_{1}^{I}(R) < -1$, then at $n=b_{1}^{I}(R)+1,$ we have 
\begin{center}
     $0 \longrightarrow \left(\mathcal{H}^{1}\left(L^{I}(R)\right)\right)_{b_{1}^{I}(R)} \longrightarrow \left(\mathcal{H} ^{1}\left(L^{I}(R)\right)\right)_{b_{1}^{I}(R)+1} $
\end{center}
         which implies that $\left(\mathcal{H}^{1}\left(L^{I}(R)\right)\right)_{b_{1}^{I}(R)}=0$, a contradiction. Therefore, $b_{1}^{I}(R) \geq -1. $
         
         \textbf{II.} If $I$ is an integrally closed ideal in $R$, then from \cite[Page 648]{si}, $I'$ is an integrally closed ideal in $S$. Thus, $\widetilde{I'}=I'$ which implies $\left(\mathcal{H}^{0}\left(L^{I'}(S)\right)\right)_{0}=0.$ Therefore, $b_{1}^{I}(R) \geq 0.$
         \end{observation}

\section{Relation between the reduction number and the postulation number of an $\mathfrak m$-primary ideal}

 Marley \cite[Lemma 2.8]{tjm} proved that if $(R,\mathfrak m)$ is a Cohen-Macaulay local ring of dimension $d$ with an infinite residue field, and $I$ an $\mathfrak m$-primary ideal of $R$ such that $x$ is a superficial element for $I$ and a  $ \gr(I)$-regular element, then $\n(I)+1=\n(I'),$ where $I'=I/(x)$. Further, in  \cite[Theorem 2.15]{tjm}, he proved that if $\depth \gr(I) \geq d-1,$ then $r(I)=\n(I)+d$. Inspired by Marley's work, one can ask whether these results can be  extended  without any  assumption on the depth of the associated graded ring. In this section,  we attempt to extend Marley's results. 

In the next proposition, we observe that  $\rho(I)$  plays an important role in describing the behaviour of the postulation number of $I$ and the postulation number of $I$ going mod a superficial element.

 \begin{proposition} \label{riya}
        Let $(R, \mathfrak m)$ be a  Cohen-Macaulay local ring of dimension $d$ and $I$ an $\mathfrak m$-primary ideal. Let $x \in I$ be a superficial element for $I$ and $I'=I/(x)$. 
        \begin{enumerate}[(i)]
        \item If $\n(I) > \rho(I)-1 $, then $\n(I') = \n(I)+1.$
          \item If $\n(I)=\rho(I)-1 $, then $\n(I') \leq \n(I)+1.$ 
            \item  If $\n(I) < \rho(I)-1 $, then $\n(I') > \n(I)+1.$
          
        \end{enumerate}
\end{proposition}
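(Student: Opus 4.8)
The strategy is to translate everything into statements about the graded modules $H^{0}(L^{I}(R))$ and $H^{0}(L^{I'}(S))$ via the long exact sequence \eqref{eq:u} in local cohomology, and to use the two standard dictionary facts: first, that $\n(I)$ is the largest degree $n$ with $\bigl(H^{0}(L^{I}(R))\bigr)_{n}\neq 0$ together with the higher cohomology contributions — more precisely, that for a Cohen–Macaulay ring $P_{I}(n)-H_{I}(n)=\sum_{i\ge 0}(-1)^{i}\lambda\bigl(H^{i}(L^{I}(R))_{n}\bigr)$, so in the situations below one must also watch the $H^{i}$ for $i\ge1$; and second, that $\rho(I)$ is the smallest $k$ with $\widetilde{I^{n}}=I^{n}$ for all $n\ge k$, i.e. $\bigl(H^{0}(L^{I}(R))\bigr)_{n}=0$ for $n\ge k$, by Puthenpurakal's identification $H^{0}(L^{I}(R))=\bigoplus_{n\ge 0}\widetilde{I^{n+1}}/I^{n+1}$. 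The key structural input is the four-term exact sequence from \eqref{eq:u}, which gives for each $n$ the exact piece
\[
0\to \mathcal{B}(x,R)_{n}\to H^{0}(L^{I}(R))_{n-1}\xrightarrow{X} H^{0}(L^{I}(R))_{n}\to H^{0}(L^{I'}(S))_{n}\to H^{1}(L^{I}(R))_{n-1}\xrightarrow{X}H^{1}(L^{I}(R))_{n},
\]
and $\mathcal{B}(x,R)_{n}$ vanishes once $(I^{n+1}:x)=I^{n}$, i.e. for $n\ge\rho_{x}(I)=\rho(I)$.

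For part (1), assume $\n(I)>\rho(I)-1$, so $\n(I)\ge\rho(I)$. Since $H^{0}(L^{I}(R))_{n}=0$ for $n\ge\rho(I)$, the value $P_{I}(n)-H_{I}(n)$ for $n\ge\rho(I)$ is governed entirely by $\sum_{i\ge1}(-1)^{i}\lambda\bigl(H^{i}(L^{I}(R))_{n}\bigr)$; I would first note $\n(I)\ge\rho(I)$ forces at least one $H^{i}$, $i\ge1$, to be nonzero in degree $\n(I)$, and then chase the long exact sequence to compare the top nonvanishing degree of $H^{i}(L^{I'}(S))$ with that of $H^{i}(L^{I}(R))$. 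The crucial point is that, because $\mathcal{B}(x,R)$ and $H^{0}(L^{I}(R))$ are concentrated in degrees $<\rho(I)\le\n(I)+1$, the connecting maps in \eqref{eq:u} give isomorphisms $H^{i}(L^{I}(R))_{n-1}/X\cdot(\cdots)\;\hookrightarrow\;H^{i}(L^{I'}(S))_{n}$ type relations in the relevant degrees, and a Nakayama/$*$-Artinian argument on the $X$-multiplication (using that these modules are $*$-Artinian, hence each graded piece has finite length and $X$ acts nilpotently in the sense needed) shows $H^{i}(L^{I'}(S))$ is nonzero in degree $\n(I)+1$ and zero above it. Assembling the alternating sums then yields $\n(I')=\n(I)+1$; here one must be slightly careful that no cancellation in the alternating sum accidentally kills degree $\n(I)+1$, which is handled by the sign-coherence of these lengths — essentially Marley's argument, but now only invoked in degrees $\ge\rho(I)$ where $H^0$ is already gone, which is why the depth hypothesis can be dropped.

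For parts (2) and (3) the same exact sequence is used but the conclusion is only an inequality because $H^{0}$ now genuinely interferes near degree $\n(I)$. In case $\n(I)=\rho(I)-1$: for $n>\n(I)+1=\rho(I)$ all of $H^{0}(L^{I}(R))_{n-1}$, $H^{0}(L^{I}(R))_{n}$, $\mathcal{B}(x,R)_{n}$ vanish, so $H^{0}(L^{I'}(S))_{n}\hookrightarrow H^{1}(L^{I}(R))_{n-1}$ and the higher terms match up as in part (1); since $P_{I}(m)=H_{I}(m)$ for all $m>\n(I)$, all $H^{i}(L^{I}(R))_{m}=0$ for $m>\n(I)$, so the exact sequence forces $H^{i}(L^{I'}(S))_{n}=0$ for all $n>\n(I)+1$ and all $i$, giving $\n(I')\le\n(I)+1$. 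In case $\n(I)<\rho(I)-1$: here $H^{0}(L^{I}(R))_{\rho(I)-1}\neq 0$ by minimality of $\rho(I)$, and $\rho(I)-1>\n(I)+1$; I would produce a nonzero element of $H^{0}(L^{I'}(S))$ (hence a nonzero $P_{I'}-H_{I'}$) in a degree $>\n(I)+1$ by pushing this element through the map $H^{0}(L^{I}(R))_{\rho(I)-1}\to H^{0}(L^{I'}(S))_{\rho(I)-1}$ and showing it survives — its image is nonzero because the preceding map $H^{0}(L^{I}(R))_{\rho(I)-2}\xrightarrow{X}H^{0}(L^{I}(R))_{\rho(I)-1}$ need not be surjective onto the socle — and then arguing that this forces $\n(I')\ge\rho(I)-1>\n(I)+1$, or at any rate $\n(I')\ge\n(I)+1$ is immediate once we show $H^{\bullet}(L^{I'}(S))$ does not vanish identically above degree $\n(I)+1$.

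The main obstacle I expect is precisely the bookkeeping in parts (1) and (3): controlling the $X$-multiplication maps $H^{i}(L^{I}(R))_{n-1}\to H^{i}(L^{I}(R))_{n}$ well enough to know that the connecting maps deliver \emph{nonzero} classes in $H^{i}(L^{I'}(S))$ in the exact degree claimed, rather than merely bounding degrees from one side. Because $L^{I}(R)$ is not finitely generated, one cannot invoke ordinary Castelnuovo–Mumford regularity directly; instead one leans on the $*$-Artinian property (from \cite[Prop.~4.4, 5.4]{tjp}) which guarantees each $H^{i}(L^{I}(R))$ vanishes in all sufficiently positive degrees and that the relevant submodule chains stabilize, and on the bound $b^{I}_{i}\le a_{i+1}(\gr(I))-1$ to pin down where the top nonzero degrees sit. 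Once the top-degree behavior of each $H^{i}(L^{I'}(S))$ is matched against that of $H^{i}(L^{I}(R))$ shifted by one, the three alternating-sum identities drop out.
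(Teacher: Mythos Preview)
Your approach via the local cohomology of $L^{I}(R)$ is far heavier than what the paper does and contains a real gap. The paper's argument is entirely elementary: since $\rho(I)=\rho_{x}(I)$, one has $(I^{n+1}:x)=I^{n}$ for all $n\ge\rho(I)$ and $(I^{\rho(I)}:x)\supsetneq I^{\rho(I)-1}$; feeding this into the computation in Lemma~\ref{pushpa} gives $H_{I'}(m)=H_{I}(m)-H_{I}(m-1)$ for every $m\ge\rho(I)+1$ and the strict inequality $H_{I'}(\rho(I))>H_{I}(\rho(I))-H_{I}(\rho(I)-1)$. Together with $P_{I'}(m)=P_{I}(m)-P_{I}(m-1)$ and the definition of $\n(I)$, each of the three cases then follows by two or three lines of direct comparison of Hilbert functions --- no cohomology is invoked at all.

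The specific gap in your proposal is the inference ``since $P_{I}(m)=H_{I}(m)$ for all $m>\n(I)$, all $H^{i}(L^{I}(R))_{m}=0$ for $m>\n(I)$''. Even granting your alternating-sum identity (which you would still need to justify, with the correct degree shift), the vanishing of $\sum_{i}(-1)^{i}\lambda\bigl(H^{i}(L^{I}(R))_{m}\bigr)$ in no way forces each summand to vanish. The ``sign-coherence'' you appeal to is precisely Marley's mechanism, and Marley needs $\depth\gr(I)\ge d-1$ for it; that hypothesis is absent here. The same problem bites in parts~(1) and~(3): a nonzero $H^{0}(L^{I'}(S))_{n}$, or a nonzero $H^{i}(L^{I}(R))_{\n(I)}$, does not by itself force $P_{I'}(n)\neq H_{I'}(n)$ or $P_{I}(\n(I))\neq H_{I}(\n(I))$ without ruling out cancellation against the other cohomology groups. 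The machinery you are reaching for is exactly what the paper deploys in Sections~4 and~5, and there only under the additional hypothesis that the Ratliff--Rush filtration behaves well mod a superficial sequence; Proposition~\ref{riya} needs none of it.
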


\begin{proof}
Set $S=R/(x)$. Since $x$ is a superficial element for $I$, therefore, from Definition \ref{cute}, we have $(I^{n+1}:x)=I^{n}$  for all $n \geq \rho(I)$. By Lemma \ref{pushpa}, we have  $H_{I'}(n) =  H_{I}(n)-H_{I}(n-1)$ for all $ n \geq \rho(I)+1.$ By  (\ref{2}),  at $\rho(I)$, we have  $H_{I'}(\rho(I)) > H_{I}(\rho(I))-H_{I}(\rho(I)-1).$ 
\begin{enumerate}[(i)]

    \item Suppose $\n(I) > \rho(I)-1.$  For any $m > \n(I)+1 \geq \rho(I)+1$, we have $ H_{I'}(m) =  H_{I}(m)-H_{I}(m-1) = P_{I}(m)-P_{I}(m-1)= P_{I'}(m)$  so, $\n({I'}) \leq \n(I)+1$. If $\n(I') < \n(I)+1,$ then $ H_{I'}(\n(I)+1)=P_{I'}(\n(I)+1),$ which implies $ H_{I}(\n(I)+1)-H_{I}(\n(I))=P_{I}(\n(I)+1)-P_{I}(\n(I))$. Thus, $H_{I}(\n(I))=P_{I}(\n(I))$,   a contradiction. Hence, $\n(I') = \n(I)+1.$

\item Suppose $\n(I) = \rho(I)-1.$ Then for any $m \geq \rho(I)+1$, we have $ H_{I'}(m)= H_{I}(m)-H_{I}(m-1)=P_{I}(m)-P_{I}(m-1)=P_{I'}(m)$. Therefore, we get $\n(I') < \rho(I)+1$ which implies $\n(I') \leq \n(I)+1.$

    \item Suppose $\n(I)<\rho(I)-1$.  We have  $ H_{I'}(\rho(I)) > P_{I'}(\rho(I))$ as $H_{I'}(\rho(I))>H_{I}(\rho(I))-H_{I}(\rho(I)-1) = P_{I}(\rho(I))-P_{I}(\rho(I)-1) = P_{I'}(\rho(I)).$ Therefore, we get $\n(I') \geq \rho(I)>\n(I)+1$ which implies $ \n(I') > \n(I)+1. $ \qedhere
    \end{enumerate}  \end{proof}

\begin{remark} \label{singh}
\normalfont
From Proposition \ref{riya} $(ii)$, if $\n(I)=\rho(I)-1$, then for $m \geq \rho(I)+1$, we have $H_{I'}(m) = P_{I'}(m).$ Thus, $\n(I') \leq \rho(I)= \n(I)+1.$ At $\rho(I)-1,$  $ H_{I}(\rho(I)-1) \neq P_{I}(\rho(I)-1) $ which implies $ H_{I}(\rho(I)-1) = P_{I}(\rho(I)-1)+l$ for some non-zero integer $l.$ If $l<0$, then  $H_{I}(\rho(I)-1)<P_{I}(\rho(I)-1).$
From Lemma \ref{pushpa}, at $\rho(I)$, we have  $H_{I'}(\rho(I))=H_{I}(\rho(I))-H_{I}(\rho(I)-1)>P_{I}(\rho(I))-P_{I}(\rho(I)-1)= P_{I'}(\rho(I)).$ Therefore, $H_{I'}(\rho(I))> P_{I'}(\rho(I)),$ which implies $\n(I') \geq \rho(I)=\n(I)+1.$   Thus, if $H_{I}(n) < P_{I}(n)$ for $n = \rho(I)-1$, we have $ \n(I') = \n(I)+1$.
\end{remark}

The following examples illustrate the above proposition.

\begin{example} \label{kkr}
\normalfont
\cite[Example 5.1]{jds} Let $R=\mathbb{Q}\llbracket x,y\rrbracket$ and $I=(x^6,x^4y,xy^5,y^6)$.  The Hilbert polynomial for $I$ is 
\begin{center}
    $P_{I}(n)=30\dbinom{n+1}{2}-10n+3.$
\end{center}
Note that $P_{I}(4) = H_{I}(4),$ thus from Theorem \ref{hoa}, $\n(I) <4.$ Using Macaulay 2, we get $P_{I}(0) \neq H_{I}(0)$ and $P_{I}(n) = H_{I}(n)$ for $1\leq n\leq3,$ therefore,  $\n(I)=0.$
 From \cite[Proposition 1.2(2)]{rv}, $p=x^4y+y^6$ is a superficial element for $I$ as $e_{0}(I)=30=e_{0}(I/(p)), e_{1}(I)=10=e_{1}(I/(p)).$ Since $x^3y^4 \in (I^2:I)$ but $x^3y^4 \notin I$, therefore $\widetilde{I} \neq I,$ which implies $\rho(I) \geq 2$ from \cite[Remark 1.6]{rs}. Here, $\n(I) \neq \rho(I)-1.$ Let $S=R/(p)$ and $I'=I/(p).$ The Hilbert polynomial for $I'$ is $P_{I'}(n)=30n-10.$ 
 Here,   $P_{I'}(2) \neq H_{I'}(2)$,  thus $\n(I')\geq 2 > \n(I)+1.$
\end{example}

\begin{example} \cite[Example 2.7]{tjm} \label{richa}
          \normalfont Let  $R=\mathbb{Q}\llbracket x,y\rrbracket$ and $I=(x^7,x^6y,xy^6,y^7)$.  The Hilbert polynomial for $I$ is 
\begin{center}
    $P_{I}(n)=49\dbinom{n+1}{2}-21n.$
\end{center}
Note that $P_{I}(5) = H_{I}(5),$ thus from Theorem \ref{hoa}, $\n(I) <5.$ Using Macaulay 2, we get  $P_{I}(4) \neq H_{I}(4),$ therefore, $\n(I)=4.$
  From \cite[Proposition 1.2(2)]{rv},  $p=x^7+y^7$ is a superficial element for $I$ as $e_{0}(I)=49=e_{0}(I/(p))$ and $e_{1}(I)=21=e_{1}(I/(p))$. Here $J=(x^{7},y^{7}
  )$ is a minimal reduction of $I$ with $r_{J}(I)=5$ and $(I^{6}:x)= I^{5}$, therefore, from \cite[Proposition 4.3]{hjls}, $\rho(I) \leq 5.$  Using Macaulay 2, we get $\rho(I)=5,$ as $(I^{5}:x)\neq I^{4},$ which implies  $\n(I)=\rho(I)-1.$
            Let $S=R/(p)$, $I'=I/(p)$ and $J'=J/(p).$ The Hilbert polynomial for $I'$ is
                   $P_{I'}(n)=49n-21.$ Note that, $r_{J}(I) \geq r_{J'}(I')=\n(I')+1$  by \cite[Proposition 4.10]{oo}, which implies $\n(I')\leq 4 < \n(I)+1.$ 
        
\end{example}

We now answer Marley's Question \ref{csk} for two-dimensional Cohen-Macaulay local rings. We observe that both  inequalities are possible under certain assumptions. 
 
\begin{theorem} \label{abhishek}

 Let $(R, \mathfrak m)$ be a two dimensional  Cohen-Macaulay local ring, and $I$  an $\mathfrak m$-primary ideal. If $\n(I) \neq \rho(I)-1, $ then $r(I) \geq \n(I)+2.$
 \end{theorem}

 \begin{proof}
     Suppose $\n(I) \neq \rho(I)-1 $. Let $x \in I$ be a superficial element for $I.$ Set $S=R/(x)$ and $I'=I/(x).$ By Proposition \ref{riya} $(i)$ and $(iii)$, $ \n(I') \geq \n(I)+1$. Here, $S$  a one-dimensional Cohen-Macaulay local ring, hence by \cite[Proposition 4.10]{oo}, we have $\n(I')+1=r(I').$ Therefore,  
 $\n(I)+1 \leq \n(I')=r(I')-1\leq r(I)-1$. Thus, $r(I) \geq \n(I)+2.$ \end{proof}

We recall  Example \ref{kkr} to illustrate the above theorem.

\begin{example} \normalfont
\cite[Example 5.1]{jds} Let $R=\mathbb{Q}\llbracket x,y\rrbracket$ and $I=(x^6,x^4y,xy^5,y^6)$.  Note that
$J=(x^4y,x^6+xy^5+y^6)$ is a minimal reduction for $I$ with  $r_{J}(I)=3 \geq \n(I)+2.$  From \cite[Theorem 3.3']{yw}, if there exist a minimal reduction $K$ of $I$ such that $r_{K}(I)\geq \n(I)+2,$ then $r(I)$ is independent. 
Therefore, $r(I) \geq \n(I)+2.$
\end{example}

\begin{remark}
    Wu \cite[Remark 1.(b)]{yw} proved that,  if $(R,\mathfrak m)$ is a Cohen-Macaulay ring of dimension two, and $I$ is an $\mathfrak m$-primary ideal with $J \subseteq I$ a minimal reduction such that $r_{J}(I) \leq \n(I)+1$ then $\n(I)=\rho(I)-1.$ The next proposition gives us the converse of the above-stated result.
\end{remark}

\begin{theorem} \label{anjali}

 Let $(R, \mathfrak m)$ be a two dimensional  Cohen-Macaulay local ring, and $I$ an $\mathfrak m$-primary ideal. If $\n(I) = \rho(I)-1 $, then $r(I) \leq \n(I)+2.$
 \end{theorem}

 \begin{proof}
 
    Let $J=(x,y)$ be a minimal reduction of $I$ such that $x \in I$ is a superficial element for $I$. We have $r_{J'}(I') \leq r_{J}(I)$ where $J'=J/(x)$ and $I'=I/(x).$ 
       Since $\n(I)= \rho(I)-1$, by Proposition \ref{riya} $(ii)$, $\n(I')\leq \n(I)+1.$ Now,  $S=R/(x)$ is a one-dimensional Cohen-Macaulay ring, by \cite[Proposition 4.10]{oo}, we have $r(I')=\n(I')+1$ which implies $r(I') \leq \rho(I)+1.$  If possible  let $r_{J}(I) > \rho(I)+1.$ By \cite[Lemma 3.1]{ms}, we have $\widetilde{I^{\rho(I)+1}} \neq I^{\rho(I)+1} $ as  $r(I') \leq \rho(I)+1 < r_{J}(I)$, which is a contradiction. Therefore, $r_{J}(I) \leq \rho(I)+1$ which implies  $r_{J}(I) \leq \n(I)+2.$ Also $r(I) \leq r_{J}(I) $ for any minimal reduction $J$ of $I$. Thus, $r(I) \leq \n(I)+2.$ 
\end{proof}

       The following example illustrates the above theorem.

\begin{example} \cite[Example 4.6]{rtt}
    \normalfont Let $R=\mathbb{Q}\llbracket x,y\rrbracket$ and $I=(x^7,x^6y,x^2y^5,y^7).$ Since $x^5y^4 \in (I^2:I)$ but $x^5y^4 \notin I$, therefore, $\widetilde{I}\neq I.$   Thus, $\depth \gr(I)=0.$ The Hilbert polynomial for $I$ is 
\[P_{I}(n)=49\dbinom{n+1}{2}-21n+3.\]

Note that $P_{I}(4) = H_{I}(4),$ thus from Theorem \ref{hoa}, $\n(I) <4.$ Using Macaulay 2, we get $P_{I}(3) \neq H_{I}(3),$ therefore, $\n(I)=3.$  
From \cite[Proposition 1.2(2)]{rv}, $p=x^7+y^7$ is a superficial element for $I$ as $e_{0}(I)=49=e_{0}(I/(p)), e_{1}(I)=21=e_{1}(I/(p)) $. Note that  $J=(x^7,y^7)$ is a minimal reduction of $I$ with $r_{J}(I)=4$ and $(I^{5}:x)=I^{4},$ therefore from \cite[Proposition 4.3]{hjls}, $\rho(I)\leq 4.$ Using Macaulay 2, we get $\rho(I)= 4,$ as $(I^{4}:x)\neq I^{3},$ which is equal to $\n(I)+1.$ Therefore,  $r(I) \leq r_{J}(I)  \leq \n(I)+2.$
\end{example}

As an immediate application of Theorem \ref{abhishek}, we provide an alternative proof for a proposition by Hoa \cite[Proposition 3.7]{lth} regarding the independence of $r(I)$.

\begin{corollary}\label{bb}\cite[Proposition 3.7]{lth}  Let $(R, \mathfrak m)$ be a two dimensional  Cohen-Macaulay local ring, and $I$  an $\mathfrak m$-primary ideal. If $\n(I) \neq \rho(I)-1$, then the reduction number is independent.
 \end{corollary}

\begin{proof}
    Since $\n(I) \neq \rho(I)-1$, by Theorem \ref{abhishek}, we have $r(I) \geq  \n(I)+2.$ Thus, the conclusion follows from \cite[Theorem 3.3]{yw}.
\end{proof}


 In the corollary below, we outline the condition under which $r(I)=\n(I)+2$ without any assumption on the depth of the associated graded ring.

\begin{corollary}
    Let $(R,\mathfrak m)$ be a two dimensional Cohen-Macaulay local ring, and $I$ an $\mathfrak m$-primary ideal. Suppose $\n(I) = \rho(I)-1$ and $H_{I}(\n(I)) < P_{I}(\n(I)),$ then  $r(I)=\n(I)+2.$
\end{corollary}

\begin{proof}
    Let $J=(x,y)$ be a minimal reduction for $I,$ where $x \in I$ is a superficial element for $I.$   Since $\n(I) = \rho(I)-1$ and $H_{I}(\n(I)) < P_{I}(\n(I)),$ by  Remark \ref{singh}, we have $\n(I')=\n(I)+1,$ where $I'=I/(x).$ Also, $S=R/(x)$ is a one-dimensional Cohen-Macaulay local ring, thus by \cite[Proposition 4.10]{oo}, it follows that $r(I')=\n(I')+1=\n(I)+2,$ also $r(I') \leq r_{J}(I),$ which implies $\n(I)+2 \leq r_{J}(I).$ By \cite[Theorem 3.3']{yw}, $r(I)$ is independent. Thus, $\n(I)+2 \leq r(I).$ Again by Theorem \ref{anjali}, we have $r(I) \leq \n(I)+2.$ Therefore, $r(I)=\n(I)+2.$
\end{proof}

Next, we generalize Theorem \ref{abhishek} for  Cohen-Macaulay local rings of dimension $d \geq 3$ with $\depth \gr(I) =d-2$ under certain assumptions on $I$ and $\n(I).$ It is very well known from the literature that the integral closure of an ideal behaves well reducing modulo a superficial element.  Itoh proved that:

\begin{lemma}\cite[Page 648]{si} \label{maa}
If $I$ is an $\mathfrak{m}$-primary ideal which is integrally closed in a Cohen–Macaulay local ring $(R, \mathfrak{m})$ of dimension $d \geq 2$, then (at least after passing to a faithfully flat extension) there exists a superficial element $x \in I$ such that $I/(x)$ is integrally closed in $R/(x)$.    
\end{lemma}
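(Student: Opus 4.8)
The plan is to replace $R$ by a faithfully flat extension in which the residue field becomes ``generic'', then to take $x$ to be a generic element of a fixed generating set of $I$, and finally to show that passing to $R/(x)$ commutes with forming integral closures of ideals; since $\overline{I}=I$ by hypothesis, this last fact immediately yields $\overline{I/(x)}=I/(x)$. The first two steps are soft; the real difficulty is the last.

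Write $I=(a_1,\dots,a_\mu)$ and set $R^\ast=R[\lambda_1,\dots,\lambda_\mu]_{\mathfrak{m}R[\lambda_1,\dots,\lambda_\mu]}$, the adjunction of $\mu$ indeterminates. Then $R\to R^\ast$ is faithfully flat with regular fibres, so $R^\ast$ is Cohen--Macaulay local of the same dimension $d\ge 2$, its residue field $k(\lambda_1,\dots,\lambda_\mu)$ is infinite, and $IR^\ast$ is $\mathfrak{m}R^\ast$-primary. Since the integral closure of an ideal commutes with this flat base change, $\overline{IR^\ast}=\overline{I}\,R^\ast=IR^\ast$, so $IR^\ast$ is integrally closed and it suffices to produce $x$ inside $R^\ast$. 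From now on I work over $R^\ast$ and look for $x$ among the generic combinations $x=\lambda_1a_1+\dots+\lambda_\mu a_\mu\in IR^\ast$. Such an $x$ is a superficial element for $IR^\ast$ (a generic linear combination of the $a_i$ avoids, in degree one, the finitely many associated primes of $\gr(IR^\ast)$ other than the irrelevant one), and it is a nonzerodivisor because $R^\ast$ is Cohen--Macaulay of positive depth.

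Put $S=R^\ast/(x)$ and $I'=IR^\ast/(x)$. Since $x\in IR^\ast$, one has $I'\subseteq\overline{I'}$ trivially, while reducing an integral-dependence equation modulo $x$ gives $(\overline{IR^\ast}+(x))/(x)\subseteq\overline{I'}$; as $\overline{IR^\ast}=IR^\ast$, the left-hand side is just $I'$. Hence the lemma is equivalent to the reverse inclusion $\overline{I'}\subseteq I'$, that is, to the assertion that every $\bar r\in\overline{I'}$ lifts to an element $r\in\overline{IR^\ast}=IR^\ast$ (if one lift lies in $\overline{IR^\ast}$ then all do, since two lifts differ by an element of $(x)\subseteq IR^\ast$). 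I would establish this via Rees valuations. Let $v_1,\dots,v_s$ be the Rees valuations of $IR^\ast$ (a finite set, by Rees's theorem), so that $\overline{(IR^\ast)^{\,n}}=\{a\in R^\ast:\ v_j(a)\ge n\,v_j(IR^\ast)\ \text{for all }j\}$ for every $n\ge 1$; geometrically these are the valuations attached to the exceptional divisors of the normalized blow-up $X$ of $IR^\ast$. Because $x$ is a \emph{generic} element of $IR^\ast$ and $\dim X\ge d\ge 2$, a Bertini-type argument would show that the normalized blow-up of $I'$ in $S$ is the intersection of $X$ with $V(x)$, that this intersection is again normal with exceptional divisors the (reduced, irreducible) generic hyperplane sections of those of $X$, and hence that the Rees valuations $v_1',\dots,v_s'$ of $I'$ are exactly the restrictions of $v_1,\dots,v_s$, with $v_j'(I')=v_j(IR^\ast)$ and $v_j'(\bar r)=v_j(r)$ for every lift $r$ of $\bar r$. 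Granting this, $\bar r\in\overline{I'}$ forces $v_j(r)\ge v_j(IR^\ast)$ for all $j$, hence $r\in\overline{IR^\ast}=IR^\ast$, as required. (Equivalently, the point can be phrased as a Valabrega--Valla-type statement: for generic $x$ one has $\overline{(IR^\ast)^{\,n+1}}\cap(x)=x\,\overline{(IR^\ast)^{\,n}}$ for \emph{all} $n\ge 0$.)

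The main obstacle is precisely this last step: proving, in arbitrary characteristic, that a generic hyperplane section of the normalized blow-up stays normal and that its exceptional locus --- equivalently, the Rees valuations of $I'$ --- matches that of $IR^\ast$ bijectively with the same values; in the filtration formulation, the difficulty is upgrading $\overline{(IR^\ast)^{\,n+1}}\cap(x)=x\,\overline{(IR^\ast)^{\,n}}$ from $n\gg 0$ (which is automatic for a suitably chosen superficial $x$) to all $n\ge 0$. This is exactly where $\dim R\ge 2$ and the passage to a faithfully flat extension with transcendental residue field are genuinely needed; everything else is formal.
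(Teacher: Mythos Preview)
The paper does not supply a proof of this lemma at all: it is simply quoted from Itoh with the citation \cite[Page 648]{si} and used as a black box. So there is no ``paper's own proof'' to compare your attempt against.

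As for your proposal on its own merits: the overall architecture---pass to $R[\lambda_1,\dots,\lambda_\mu]_{\mathfrak m R[\lambda]}$, take the generic element $x=\sum\lambda_i a_i$, and then show that forming integral closures commutes with killing $x$---is exactly the shape of Itoh's original argument, and your reformulation in terms of Rees valuations (equivalently, the Valabrega--Valla-type condition $\overline{(IR^\ast)^{n+1}}\cap(x)=x\,\overline{(IR^\ast)^{n}}$ for all $n\ge 0$) is a legitimate and standard way to phrase what must be checked. However, you have not actually proved the lemma: the entire content sits in what you yourself call ``the main obstacle,'' namely the Bertini-type statement that a generic hyperplane section of the normalized blow-up remains normal with matching exceptional divisors and valuation values. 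You assert that such an argument ``would show'' the required conclusion and then proceed ``granting this''; but that step is precisely the theorem, and everything before it is, as you note, formal. In particular, the claim that the Rees valuations of $I'$ are in bijection with those of $IR^\ast$ with identical values does not follow from genericity alone without real work (and can fail for non-generic superficial $x$). So what you have written is a correct and well-informed outline of a proof, not a proof; to complete it you would need to either carry out the Bertini argument in detail or invoke Itoh's result---which is what the paper does.
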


In the next two propositions, we establish the relation between $\n(I)$ and $r(I)$ for Cohen-Macaulay local rings of dimension $d \geq 3$, with $\depth \gr(I)=d-2.$

\begin{proposition} \label{papa}
    Let $(R, \mathfrak{m})$ be a   Cohen-Macaulay local ring of dimension $d \geq 3$ and $I$ an $\mathfrak{m}$-primary ideal such that $I$ is integrally closed in $R$ and $\depth \gr(I)= d-2.$ If $\n(I)=-(d-3),$ then $r(I)  \geq \n(I)+d.$  Moreover, the reduction number is independent.
\end{proposition}

\begin{proof}
   By Lemma \ref{maa}, we choose a superficial sequence  $x_{1}, x_{2},  \ldots, x_{d-2}$ for $I$ such that $I'$ is integrally closed in $S$, where $I'=I/(x_{1}, x_{2}, \ldots, x_{d-2})$ and $S=R/(x_{1}, x_{2}, \ldots, x_{d-2}).$  Since  $\depth \gr(I) =d-2$, therefore, from \cite[Lemma 2.8]{tjm}, $\n(I')=\n(I)+d-2.$   Now $\n(I)=-(d-3)$, implies $\n(I')=1.$ Note that $S$ is a two-dimensional Cohen-Macaulay ring with $\depth \gr(I')=0$. We claim that $\rho(I') \geq 3$. As $\depth \gr(I')=0$, by \cite[1.3]{hjls} $\rho(I') > 1.$ When  $\rho(I') =2$, then $I' \neq \widetilde{I'}$. But this is a contradiction to the assumption that $I'$ is an integrally closed ideal. 
   Thus, claim holds and $\n(I') \neq \rho(I')-1$. Hence,  by Theorem  \ref{abhishek},  $r(I') \geq \n(I')+2.$ Therefore, by \cite[Lemma 2.14]{tjm}, $r(I) = r(I')  \geq \n(I')+2 = \n(I)+d.$ Further, the reduction number is independent follows from \cite[Theorem 4.3]{yw}.
\end{proof}

        \begin{remark}
            In the hypothesis of Proposition \ref{papa}, $I$ being integrally closed in $R$ is a necessary condition. Let $R=\mathbb{Q}\llbracket x,y,z\rrbracket$ and $I=(x^{3},y^{3},z^{3},xy^{2},x^{2}z,xz^{2},y^{2}z,yz^{2},xyz)$. Note that $I$ is not integrally closed as $x^{2}y \in \Bar{I}$ but $x^{2}y \notin I.$ By \cite[Proposition 1.12]{rs},  $\depth \gr(I)=1$. The Hilbert polynomial for $I$ is 
\begin{center}
     $P_{I}(n)=27\dbinom{n+2}{3}-18\dbinom{n+1}{2}+n+1.$
\end{center}
              Here $P_{I}(0) \neq H_{I}(0)$ and $P_{I}(n) = H_{I}(n)$ for all $n \geq 1,$ so $\n(I) = 0.$ Using Macaulay 2, $K=(\frac{5}{3}x^{3}+6xy^{2}+\frac{7}{6}y^{3}+\frac{1}{10}x^{2}z+xyz+\frac{3}{10}y^{2}z+\frac{7}{4}xz^{2}+\frac{1}{4}yz^{2}+z^{3},\frac{6}{5}x^3+\frac{1}{4}xy^{2}+\frac{3}{5}y^{3}+\frac{1}{4}x^2z+\frac{3}{8}xyz+\frac{6}{5}y^{2}z+\frac{3}{10}xz^{2}+\frac{7}{9}yz^{2}+z^{3},\frac{1}{6}x^{3}+\frac{2}{3}xy^{2}+5y^{3}+\frac{1}{5}x^{2}z+\frac{7}{8}xyz+\frac{3}{4}y^{2}z+xz^{2}+\frac{5}{3}yz^{2},3z^{3})$ is a minimal reduction of $I$ and $2=r_{K}(I) < \n(I)+3$ also $r_{K}(I) \geq r(I),$ thus $r(I) < \n(I)+3.$         
        \end{remark}

        \begin{proposition} \label{purvi}
             Let $(R, \mathfrak{m})$ be a   Cohen-Macaulay local ring of dimension $d \geq 3$ and $I$  an $\mathfrak{m}$-primary ideal such that $\depth \gr(I)= d-2.$ If $\n(I) < -(d-3),$ then $r(I)  \geq \n(I)+d.$ Further, in this case the reduction number is independent.
        \end{proposition}

        \begin{proof}
Let  $x_{1}, x_{2},  \ldots, x_{d-2}$ be a superficial sequence for $I$ such that  $I'=I/(x_{1}, \ldots, x_{d-2})$ and $S=R/(x_{1}, \ldots, x_{d-2}).$  Since the $\depth \gr(I)=d-2$, therefore, from \cite[Lemma 2.8]{tjm}, $\n(I')=\n(I)+d-2.$ It is given that $\n(I) < -(d-3)$ which implies $\n(I')<1.$ Since $\depth \gr(I')=0$,  by \cite[1.3]{hjls} we have $\rho (I')>1.$ Which implies $\n(I') \neq \rho (I')-1.$
Hence, by Theorem  \ref{abhishek}, we have $r(I') \geq \n(I')+2.$ Therefore, by \cite[Lemma 2.14]{tjm}, $r(I) = r(I') \geq \n(I')+2 = \n(I)+d.$
             \end{proof}

We conclude this section by providing a counter-example to the question raised by Marley \cite[Question 2.2]{tjm}.  

\begin{example} \label{komal}
     \normalfont Let $R=\mathbb{Q}\llbracket x,y\rrbracket$ and $I=(x^8,x^6y,xy^7,y^8)$. Since $x^5y^6 \in (I^2:I)$ but $x^5y^6 \notin I$, therefore, $\widetilde{I}\neq I.$   Thus, $\depth \gr(I)=0.$ The Hilbert polynomial for $I$ is 
\[P_{I}(n)=56\dbinom{n+1}{2}-21n+13.\]
Note that $P_{I}(14) = H_{I}(14),$ thus from Theorem \ref{hoa}, $\n(I) <14.$ Using Macaulay 2, we get  $P_{I}(1) \neq H_{I}(1)$,  and $P_{I}(n) = H_{I}(n)$ for all $2 \leq n \leq 13$, thus $\n(I)=1.$
 Here $J=(x^6y,x^8+xy^7+y^8)$ is a minimal reduction of $I$ as $JI^5=I^6$ and $\lambda(R/J)=56=e_{0}(I)$ 
with  $r_{J}(I)=5 > \n(I)+2.$ By \cite[Theorem 3.3']{yw}, we have $r_{J}(I)=5$ for every minimal reduction $J$ of $I.$ Therefore, $r_{J}(I) > \n(I)+2$ for every minimal reduction $J$ of $I.$ 
\end{example}

\section{Ratliff-Rush filtration mod a superficial sequence}

We suggest the reader that while reading this section, it is a good idea to refer to the paper \cite{tjp2}. In this section, we  extend the theorem by Marley \cite[Theorem 2.3]{tjm} for Cohen-Macaulay local rings of dimension $d$. We observe that the stronger condition on the depth of the associated graded ring can be relaxed to a weaker condition on the good behaviour of the Ratliff-Rush filtration with respect to $I$.   We recall the good behaviour of the Ratliff-Rush filtration of an ideal $I$ mod a superficial sequence from \cite[Section 4]{tjp2}.

For a positive integer $s \leq d-1$, let $x_{1}, x_{2},\ldots, x_{s} \in I$ be a superficial sequence for $I$. Set $R_{i}=R/(x_{1}, x_{2},\ldots, x_{i})$ for all $i=1,2,\ldots,s.$ We have the natural map of quotients $\pi_{i,j}:R_{i}\longrightarrow R_{j}$, for every $0\leq i < j\leq s.$ Note that 
$\pi_{i,j}\left(\widetilde{I^{n}R_{i}} \right) \subseteq \widetilde{I^{n}R_{j}} 
$  for all $n \geq 1.$

 This map induces the following map:
$\pi_{i,j}^{n}:\frac{\left(\widetilde{I^{n}R_{i}}\right)}{\left(I^{n}R_{i}\right)} \longrightarrow \frac{\left(\widetilde{I^{n}R_{j}}\right)}{\left(I^{n}R_{j}\right)}$ for every $n \geq 1.$ For a fixed $n \geq 1,$ we observe that $\pi_{i,j}\left(\widetilde{I^{n}R_{i}} \right) =\widetilde{I^{n}R_{j}} $   if and only if $\pi_{i,j}^{n}$ is surjective. For $0 \leq i<j<k \leq s$, we have the following commutative diagram:
\begin{center}
    $\begin{tikzcd}
	{\widetilde{I^{n}R_{i}}/I^{n}R_{i}} \\
	\\
	{\widetilde{I^{n}R_{j}}/I^{n}R_{j}} && {\widetilde{I^{n}R_{k}}/I^{n}R_{k}}
	\arrow["{\pi_{i,j}^{n}}"', from=1-1, to=3-1]
	\arrow["{\pi_{i,k}^{n}}", from=1-1, to=3-3]
	\arrow["{\pi_{j,k}^{n}}"', from=3-1, to=3-3]
\end{tikzcd}$
\end{center}

\begin{remark} \label{xx}
    If $\pi_{i,j}^{n}$ and $\pi_{j,k}^{n}$ are surjective, then $\pi_{i,k}^{n}$ is also surjective.
\end{remark}

\begin{definition} \cite[Definition 4.4]{tjp2}
   The Ratliff-Rush filtration of $I$ behaves well mod superficial sequence $x_{1}, x_{2},\ldots, x_{s} \in I ,$ if $\pi_{0,s}(\widetilde{I^{n}})=\widetilde{I^{n}R_{s}}$ for all $n \geq 1.$
\end{definition}

\begin{definition} \label{justin}
 Let $x_{1}, x_{2},\ldots, x_{s}$ be a superficial sequence for $I$,
then for each $i=1,2,\ldots,s$, we define
\begin{center}
    $\rho\left(I/(x_{1}, x_{2}, \ldots, x_{i})\right):=\min\{k \in \mathbb{N} :I^{n}R_{i}=\widetilde{I^{n}R_{i}}$ for all $ n \geq k \}.$
\end{center}
\end{definition}

\begin{proposition} \label{papaji}
       Let $(R, \mathfrak m)$ be a Cohen-Macaulay local ring of dimension $d \geq 2,$ $I$ an $\mathfrak m$-primary ideal and $x_{1}, x_{2},\ldots, x_{s} \in I $ be a superficial sequence for $I.$ Then $\pi_{0,s}^{n}$ is surjective for all $n \geq \max \{1, b^{I}+2\}.$
\end{proposition}

\begin{proof} Set $R_{i}=R/(x_{1}, x_{2},\ldots, x_{i})$ for all $i=1,2,\ldots,s$ and $\mathfrak M= \mathfrak M_{\mathcal{R}(I)}$.
We will prove by induction on $s.$ Suppose $s=1,$   using the short exact sequence of $\mathcal{R}(I)$-modules \cite[5]{tjp}
   
   \begin{center}
       $0\longrightarrow \gr(I) \longrightarrow L^{I}(R) \longrightarrow  L^{I}(R)(-1) \longrightarrow 0$
   \end{center}
   
   and the corresponding long exact sequence in local cohomolgy modules, we get $b_{i}^{I}(R) \leq a_{i+1}(\gr(I))-1$ for $i=0,1,\ldots, d-1$.
From \cite[2.1]{tjp2} note that, $\pi_{0,1}$ induces a  natural $\mathcal{R}(I)$-linear map $\pi^{*}:L^{I}(R) \longrightarrow L^{I/(x_{1})}(R_{1}).$ This induces a map on the local cohomology modules
   \begin{center}
       $\mathcal{H}^{0}\left(\pi^{*}\right):\mathcal{H}^{0}\left(L^{I}(R)\right) \longrightarrow \mathcal{H}^{0}\left(L^{I/(x_{1})}(R_{1})\right).$ 
\end{center}

       By \cite[proposition 4.7]{tjp} $\mathcal{H}^{0}\left(\pi^{*}\right)_{n}=\pi_{0,1}^{n}$ for all $n \in \mathbb{Z}.$
Note that $\mathcal{H}^{1}\left(L^{I}(R)\right)_{n}=0$ for all  $n \geq b_{1}^{I}(R)+1$. By the second fundamental exact sequence (\ref{eq:u}), we get, 
      \begin{center}
       $\mathcal{H}^{0}\left(L^{I}(R)\right)_{n} \xrightarrow{\pi_{0,1}^{n}} \mathcal{H}^{0}\left(L^{I/(x_{1})}(R_{1})\right)_{n}\longrightarrow 0,$ for all $n \geq b_{1}^{I}(R)+2.$
\end{center}
Thus, $\pi_{0,1}^{n}$ is surjective for all $n \geq b_{1}^{I}(R)+2$ which implies  $\pi_{0,1}^{n}$ is surjective for all $n \geq b^{I}+2.$

 Let $s\geq2,$ we assume that $\pi_{0,s-1}^{n}$ is surjective for all $ n \geq b^{I}+2,$ and prove that $\pi_{0,s}^{n}$ is surjective for all $ n \geq b^{I}+2.$
By $s=1$ case, $\pi_{0,1}^{n}$ is surjective for all $ n \geq b^{I}+2.$ Using the second fundamental exact sequence (\ref{eq:u}) and the corresponding long exact sequence of local cohomolgy modules  (\ref{4}), we get $\mathcal{H}^{i}\left(L^{I/(x_{1})}(R/(x_{1}))\right)_{n}=0$ for all $ n \geq b^{I}+2$ and for all $i=1,2,\ldots,s-1.$ Thus, by induction hypothesis $\pi_{1,s}^{n}$ is surjective for all $ n \geq b^{I}+2.$ Now, using Remark \ref{xx}, and the commutative diagram,  $\pi_{0,s}^{n}$ is surjective for all $ n \geq b^{I}+2.$
\end{proof}

 As an easy consequence of Proposition \ref{papaji}, we have the following corollary.

\begin{corollary}\label{sp}
    Let $(R, \mathfrak m)$ be a Cohen-Macaulay local ring of dimension $d \geq 2,$ $I$ an $\mathfrak m$-primary ideal and $x_{1}, x_{2},\ldots, x_{s} \in I $ be a superficial sequence for $I.$ If $\rho(I) \geq b^{I}+2$, then $\rho(I) \geq \rho\left(I/(x_{1}, \ldots, x_{s})\right).$
\end{corollary}

\begin{proof}
For all $n \geq \rho(I),$ we have $I^{n}=\widetilde{I^{n}}.$ From Proposition \ref{papaji}, we have $\rho(I) \geq b^{I}+2,$ therefore, $\pi_{0,s}^{n}$ is surjective for all $n \geq \rho(I),$ which implies $\pi_{0,s}(\widetilde{I^{n}})=\widetilde{I^{n}R_{s}}$ for all $n \geq \rho(I).$ Thus, $\pi_{0,s}(I^{n})=\widetilde{I^{n}R_{s}}$ for all $n \geq \rho(I),$ which implies $\rho(I) \geq \rho\left(I/(x_{1},x_{2,} \ldots, x_{s})\right).$
    \end{proof}

 We recall the surjectivity index of the Ratliff-Rush filtration with respect to $I$  from the introduction. Note that if the Ratliff-Rush filtration with respect to $I$ mod a superficial element behaves well then $\omega(I)=1.$ Next, we give an upper bound on $\omega(I)$ for two-dimensional Cohen-Macaulay local rings.

 \begin{proposition}  \label{ta}
     Let $(R, \mathfrak m)$ be a Cohen-Macaulay ring of dimension $d \geq 2,$ and $I$ an $\mathfrak m$-primary ideal. If $b^{I}_{1}(R) \neq -1$, then $ \omega(I) = \max\{1, b_{1}^{I}(R)+2\} \leq  \max\{1,a_{2}(\gr(I))+1\}.$
 \end{proposition}

\begin{proof} From Proposition \ref{papaji}, $\pi_{0,1}^{n}$ is surjective for all $n \geq b_{1}^{I}(R)+2$.
  Therefore, $\omega(I) \leq \max \{1,b_{1}^{I}(R)+2\}.$
If possible let, $b_{1}^{I}(R)+2 > \omega(I),$ then $\pi_{0,1}^{n}$ is surjective for all $n \geq b_{1}^{I}(R)+1.$ By the long exact sequence of local cohomology modules (\ref{4}), we get the following exact sequence:

 \begin{center} \begin{equation} \label{5}
        \left(\mathcal{H}^{0}\left(L^{I}(R)\right)\right)_{n} \xrightarrow{\pi_{0,1}^{n}}   \mathcal{H}^{0}\left(L^{I/(x_{1})}(R_{1})\right)_{n} \xrightarrow{\phi_{n}} \left(\mathcal{H}^{1}\left(L^{I}(R)\right)\right)_{n-1} \xrightarrow{\zeta_{n}} \left(\mathcal{H}^{1}\left(L^{I}(R)\right)\right)_{n}.   
 \end{equation} 
   \end{center}
 Note that,   ker$\phi_{n}$$=$ Im$\pi_{0,1}^{n}$ and ker$\zeta_{n}$$=$ Im$\phi_{n}$. Since, $\pi_{0,1}^{n}$ is surjective for all $n \geq b_{1}^{I}(R)+1,$ thus, ker$\phi_{n}$$=$ $\mathcal{H}^{0}\left(L^{I/(x_{1})}(R_{1})\right)_{n}$ for all $n \geq b_{1}^{I}(R)+1.$ Therefore, $\phi_{n}$ is a zero map for all $n \geq b_{1}^{I}(R)+1,$ which implies $\zeta_{n}$ is injective for all $n \geq b_{1}^{I}(R)+1.$
 By substituting $n=b_{1}^{I}(R)+1$ in (\ref{5}), we get $\left(\mathcal{H}^{1}\left(L^{I}(R)\right)\right)_{n}=0,$ which implies that $\left(\mathcal{H}^{1}\left(L^{I}(R)\right)\right)_{b_{1}^{I}(R)}=0,$  a contradiction. Thus,    $b_{1}^{I}(R)+1 < \omega(I),$ which implies $b_{1}^{I}(R)+2 \leq \omega(I).$ 
 Therefore,  $\omega(I)= \max\{1, b_{1}^{I}(R)+2\} \leq \max\{1, a_{2}(\gr(I))+1\}. $
 \end{proof}

 As an immediate corollary we get

\begin{corollary}
    \label{rcb}  Let $(R, \mathfrak m)$ be a Cohen-Macaulay ring of dimension two, $I$ an $\mathfrak m$-primary ideal and $x \in I$ be a superficial element for $I.$ If $ \rho(I) \geq r(I)-1,$ then $\rho(I)  \geq \rho(I/(x)). $
\end{corollary}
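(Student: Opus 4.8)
The plan is to deduce this from Corollary \ref{cc}: since that corollary already yields $\rho(I)\ge\rho(I/(x))$ as soon as $\rho(I)\ge\omega(I)$ (its proof uses only the definition of $\omega(I)$ and the equality $\widetilde{I^n}=I^n$ for $n\ge\rho(I)$, so the hypothesis $b_1^I(R)\neq-1$ carried over from Proposition \ref{papaji} is not really needed for that implication), the entire task is the numerical inequality $\omega(I)\le\rho(I)$. Since $\rho(I)\ge1$ always and $\rho(I)\ge\rd(I)-1$ by hypothesis, it suffices to prove $\omega(I)\le\max\{1,\rd(I)-1\}$.

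For that I would first extract from the proof of Proposition \ref{papaji} the upper bound $\omega(I)\le\max\{1,b_1^I(R)+2\}$ (the surjectivity of $\pi_n$ for $n\ge b_1^I(R)+2$ established there needs no restriction on $b_1^I(R)$), and then feed in $b_1^I(R)\le a_2(\gr(I))-1$ to get $\omega(I)\le\max\{1,a_2(\gr(I))+1\}$. Everything then comes down to the estimate $a_2(\gr(I))\le\rd(I)-2$ in a two-dimensional Cohen--Macaulay local ring. I would prove this by a standard local-cohomology chase: fix a minimal reduction $J$ with $\rd_J(I)=\rd(I)$, and choose generators $x_1,x_2$ of $J$ whose initial forms $x_1^{*},x_2^{*}\in\gr(I)_1$ form a homogeneous system of parameters for $\gr(I)$ with $\dim\gr(I)/(x_1^{*})\gr(I)=1$. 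The long exact sequence in $\gr(I)_+$-local cohomology of $0\to\gr(I)(-1)\xrightarrow{x_1^{*}}\gr(I)\to\gr(I)/(x_1^{*})\gr(I)\to0$, together with $H^2_{\gr(I)_+}(\gr(I)/(x_1^{*})\gr(I))=0$ (Grothendieck vanishing in dimension $1$) and $H^2_{\gr(I)_+}(\gr(I))_n=0$ for $n\gg0$, forces $a_2(\gr(I))\le a_1(\gr(I)/(x_1^{*})\gr(I))-1$; the same move with $x_2^{*}$ gives $a_1(\gr(I)/(x_1^{*})\gr(I))\le a_0(\gr(I)/(x_1^{*},x_2^{*})\gr(I))-1$, and the Artinian ring $\gr(I)/(x_1^{*},x_2^{*})\gr(I)$ has top nonzero degree exactly $\rd_J(I)$, since its degree-$n$ component is $I^n/(I^{n+1}+JI^{n-1})$, which vanishes precisely when $I^n=JI^{n-1}$. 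Chaining, $a_2(\gr(I))\le\rd_J(I)-2=\rd(I)-2$.

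Combining, $\omega(I)\le\max\{1,a_2(\gr(I))+1\}\le\max\{1,\rd(I)-1\}\le\rho(I)$, and Corollary \ref{cc} then gives $\rho(I)\ge\rho(I/(x))$; the uniformity here absorbs the case $b_1^I(R)=-1$, where $\omega(I)\le\max\{1,1\}=1\le\rho(I)$ outright. The one point that genuinely needs care is the inequality $a_2(\gr(I))\le\rd(I)-2$: it is a familiar regularity-type bound for the associated graded ring of an $\mathfrak m$-primary ideal and should be cited (or argued as above), not confused with $\reg(\gr(I))=\rd(I)$, which holds only when $\depth\gr(I)\ge d-1$. The remainder is routine bookkeeping with Proposition \ref{papaji} and Corollary \ref{cc}.
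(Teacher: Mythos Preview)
Your approach is essentially identical to the paper's: both reduce to Corollary~\ref{cc} via the chain $\omega(I)\le\max\{1,a_2(\gr(I))+1\}\le\max\{1,\rd(I)-1\}\le\rho(I)$, with the only difference that the paper cites \cite[Lemma~1.2]{tmj2} for $a_2(\gr(I))+2\le\rd(I)$ whereas you sketch the standard filter-regular/local-cohomology argument for it. One small wrinkle: the sequence $0\to\gr(I)(-1)\xrightarrow{x_1^*}\gr(I)\to\gr(I)/(x_1^*)\gr(I)\to0$ is exact only when $\depth\gr(I)\ge1$; in general there is a finite-length kernel $(0:x_1^*)$, but since $H^i$ of a finite-length module vanishes for $i\ge1$ your conclusion $a_2(\gr(I))\le a_1(\gr(I)/(x_1^*)\gr(I))-1$ is unaffected.
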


\begin{proof}
    Since $ \rho(I) \geq r(I)-1,$ then by \cite[Lemma 1.2]{tmj2}, we have  $a_{2}(\gr(I))+2 \leq r(I),$ which implies  $ \rho(I) \geq \max\{1,a_{2}(\gr(I))\} \geq \omega(I) \leq b_{1}^{I}(R)+2,$ by Proposition  \ref{ta}. Therefore, by Corollary \ref{sp} it follows that $\rho(I)  \geq \rho(I/(x)). $
\end{proof}


In the next corollary, we give a class of $\mathfrak m$-primary ideals for two-dimensional Cohen-Macaulay local rings for which the Ratliff-Rush filtration of $I$  behaves well mod a superficial element. 

\begin{corollary}
 Let $(R, \mathfrak m)$ be a Cohen-Macaulay ring of dimension two, $I$ an $\mathfrak m$-primary ideal such that $I$ is integrally closed and $x \in I$ be a superficial element for $I.$ If $r(I)=2,$ then the Ratliff-Rush filtration of $I$ behaves well mod x.
\end{corollary}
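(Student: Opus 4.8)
The plan is to deduce the statement from Corollary~\ref{rcb} together with the good-behaviour criterion packaged in Proposition~\ref{papaji} and its Corollary~\ref{cc}. Since $\dim R = 2$, the only superficial sequence that can be relevant here has length $d-2 = 0$, so ``behaves well mod $x$'' for a single superficial element $x$ is exactly the condition $\omega(I) = 1$, i.e.\ $\pi_n$ is surjective for all $n \geq 1$. Thus it suffices to show that the hypotheses $I$ integrally closed and $\rd(I) = 2$ force $\omega(I) = 1$.

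First I would invoke the Observation following the long exact sequence~(4): since $I$ is integrally closed in $R$, one has $b_1^I(R) \geq 0$, and in particular $b_1^I \neq -1$, so Proposition~\ref{papaji} applies and gives
\[
\omega(I) = \max\{1,\ b_1^I(R)+2\} \leq \max\{1,\ a_2(\gr(I))+1\}.
\]
Next I would control $a_2(\gr(I))$ using the hypothesis $\rd(I) = 2$. By \cite[Lemma 1.2]{tmj2} (the same inequality already used in the proof of Corollary~\ref{rcb}), one has $a_2(\gr(I)) + 2 \leq \rd(I) = 2$, hence $a_2(\gr(I)) \leq 0$, so $\max\{1, a_2(\gr(I))+1\} = 1$. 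Combining with the displayed inequality forces $\omega(I) = 1$.

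Finally, $\omega(I) = 1$ means $\pi_n$ is surjective for every $n \geq 1$, which by the discussion at the start of Section~4 is equivalent to $\dfrac{\widetilde{I^n}+(x)}{(x)} = \widetilde{\dfrac{I^n+(x)}{(x)}}$ for all $n \geq 1$; that is, the Ratliff--Rush filtration with respect to $I$ behaves well mod $x$. Alternatively, and perhaps more cleanly for the write-up, I would simply note $\rho(I) \geq 1 = \omega(I)$, apply Corollary~\ref{cc} to get $\rho(I) \geq \rho(I/(x))$, and then observe that in the chain of inequalities $\omega(I) \le \max\{1, a_2(\gr(I))+1\} = 1$ equality $\omega(I)=1$ is the assertion of good behaviour. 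The only real point requiring care is the bookkeeping around \cite[Lemma 1.2]{tmj2}: one must check that its hypotheses (a two-dimensional Cohen--Macaulay local ring with $\m$-primary $I$, infinite residue field) are met here, which they are by the standing assumptions of the paper. No genuine obstacle remains beyond assembling these cited inequalities.
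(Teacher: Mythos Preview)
Your proof is correct and follows essentially the same route as the paper: bound $a_2(\gr(I))$ via \cite[Lemma~1.2]{tmj2}, feed this into the inequality $\omega(I)\le\max\{1,a_2(\gr(I))+1\}$ from Proposition~\ref{papaji}, and conclude $\omega(I)=1$. You are in fact slightly more careful than the paper's own write-up, since you explicitly verify the hypothesis $b_1^I\ne -1$ of Proposition~\ref{papaji} using the observation that integral closure of $I$ forces $b_1^I\ge 0$; the paper invokes Proposition~\ref{papaji} without commenting on this.
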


\begin{proof}
   From \cite[Lemma 1.2]{tmj2}, we have $a_{2}(\gr(I))+2 \leq r(I)$. Since $I$ is an integrally closed ideal in $R$, therefore, $b_{1}^{I}(R) \neq -1.$ Thus,  by Proposition \ref{ta} we have, $\omega(I) \leq \max\{1,a_{2}(\gr(I))+1\}$, which implies $\omega(I) \leq  \max\{1,r(I)-1\}$. Since $r(I)=2$, thus $\omega(I) \leq 1$, which implies $\omega(I)=1$. Therefore, the Ratliff-Rush filtration of $I$ behaves well mod $x.$
\end{proof}

Now, as a generalization of Marley's result  \cite[Theorem 2.3]{tjm}, we  establish Theorem \ref{lm}.  Before that we prove the following lemma.

\begin{lemma} \label{fd}
   Let $(R, \mathfrak m)$ be a Cohen-Macaulay local ring of dimension $d \geq 2,$ $I$ an $\mathfrak m$-primary ideal. Suppose that the Ratliff-Rush filtration with respect to $I$ behaves well mod the superficial sequence $x_{1},x_{2},\ldots,x_{d-1} \in I,$ then 
   \begin{center}
       $\rho(I) \geq \rho\left(I/(x_{1})\right) \geq \ldots \geq \rho\left(I/(x_{1}, x_{2}, \ldots, x_{d-1})\right).$
   \end{center}
\end{lemma}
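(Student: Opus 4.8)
The plan is to iterate Corollary \ref{sp} (and its single-step predecessor, Corollary \ref{cc}) along the superficial sequence, using the hypothesis that the Ratliff-Rush filtration behaves well mod the full sequence to guarantee that at each stage the relevant inequality between stability index and surjectivity index holds. First I would recall the key point: ``behaves well mod the superficial sequence $x_1,\dots,x_{d-1}$'' means $\pi_{0,d-1}(\widetilde{I^n})=\widetilde{I^nR_{d-1}}$ for all $n\geq 1$, and by Remark \ref{xx} together with the commutative triangle this forces each intermediate map $\pi_{i,j}^n$ to be surjective for all $n\geq 1$ — in particular $\pi_{0,1}^n$ is surjective for all $n$, i.e. the Ratliff-Rush filtration with respect to $I$ behaves well mod $x_1$, so $\omega(I)=1$. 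Since $\rho(I)\geq 1=\omega(I)$ always, Corollary \ref{cc} gives $\rho(I)\geq\rho(I/(x_1))$.

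Next I would set up the induction. Write $R_i=R/(x_1,\dots,x_i)$ and $I_i=I/(x_1,\dots,x_i)$, so $I_0=I$. The claim is $\rho(I_{i-1})\geq\rho(I_i)$ for $i=1,\dots,d-1$. For the inductive step, I would observe that the image sequence $\bar x_{i+1},\dots,\bar x_{d-1}$ is a superficial sequence for $I_i$ in $R_i$, and that ``$I$ behaves well mod $x_1,\dots,x_{d-1}$'' restricts to ``$I_i$ behaves well mod $\bar x_{i+1},\dots,\bar x_{d-1}$'': indeed $\pi_{i,d-1}^n$ is surjective for all $n$ because $\pi_{0,d-1}^n$ is surjective for all $n$ and $\pi_{0,d-1}^n$ factors through $\pi_{i,d-1}^n$ via the commutative diagram. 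In particular $\pi_{i,i+1}^n$ is surjective for all $n$, so $\omega(I_i)=1\leq\rho(I_i)$, and Corollary \ref{cc} applied to the ring $R_i$, the ideal $I_i$, and the superficial element $\bar x_{i+1}$ yields $\rho(I_i)\geq\rho(I_i/(\bar x_{i+1}))=\rho(I_{i+1})$. Chaining these inequalities from $i=0$ to $i=d-2$ gives the asserted chain
\[
  \rho(I)\geq\rho(I/(x_1))\geq\cdots\geq\rho(I/(x_1,\dots,x_{d-1})).
\]

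The only real obstacle is the bookkeeping of the ``behaves well'' property descending along the tower — verifying that surjectivity of $\pi_{0,d-1}^n$ for all $n$ forces surjectivity of each $\pi_{i,i+1}^n$ for all $n$, via the commutative triangles of Section 5. This is exactly Remark \ref{xx} read in reverse (if a composite $\pi_{i,k}^n=\pi_{j,k}^n\circ\pi_{i,j}^n$ is surjective then the outer map $\pi_{j,k}^n$ need not be — but here we use that $\pi_{0,d-1}^n$ surjective plus the chain structure forces each factor to be surjective, because surjectivity of a composite of maps in this setting propagates once one knows the total map hits everything; more carefully, I would phrase the descent as: $\pi_{0,i}^n$ is automatically surjective for $n\geq\rho(I)$ from the definition, and the hypothesis supplies surjectivity for \emph{all} $n$ of the long composite, which by factoring through $R_i$ gives surjectivity of $\pi_{i,d-1}^n$, hence of $\pi_{i,i+1}^n$). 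Once that lemma-level observation is in hand, the rest is a routine iteration of Corollary \ref{cc}, so I expect the write-up to be short. Everything else (that $\bar x_{i+1},\dots$ remains superficial for $I_i$, that $R_i$ is again Cohen-Macaulay of dimension $d-i\geq 2$ when $i\leq d-2$) is standard and stated in Section 2.
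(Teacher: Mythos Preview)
Your overall plan --- iterate the one-step inequality along the tower --- is the same as the paper's. The paper, however, does not invoke Corollary~\ref{cc} or the index $\omega$ at all: once one knows $\pi_{j-1,j}^n$ is surjective, the vanishing of $\widetilde{I^nR_{j-1}}/I^nR_{j-1}$ for $n\geq\rho(I_{j-1})$ immediately forces $\widetilde{I^nR_j}/I^nR_j=0$, so $\rho(I_j)\leq\rho(I_{j-1})$. Routing this through $\omega(I_i)=1$ and then Corollary~\ref{cc} is harmless but circuitous, since the proof of that corollary is precisely this two-line observation.

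The substantive problem is in your treatment of the ``only real obstacle''. Your categorical remark is backwards: if a composite $\pi_{j,k}^n\circ\pi_{i,j}^n$ is surjective then the \emph{last} factor $\pi_{j,k}^n$ is automatically surjective; it is the \emph{first} factor $\pi_{i,j}^n$ that need not be. Hence from $\pi_{i,d-1}^n=\pi_{i+1,d-1}^n\circ\pi_{i,i+1}^n$ surjective you may conclude $\pi_{i+1,d-1}^n$ surjective, but \emph{not} $\pi_{i,i+1}^n$ surjective --- so your ``hence of $\pi_{i,i+1}^n$'' is unjustified, and with it the claim $\omega(I_i)=1$ collapses. The paper sidesteps this: at the inductive step it needs only that $\pi_{d-2,d-1}^n$ is surjective, and that \emph{is} legitimate because $\pi_{d-2,d-1}^n$ is the last factor of $\pi_{0,d-1}^n=\pi_{d-2,d-1}^n\circ\pi_{0,d-2}^n$; the remaining inequalities come from the induction hypothesis. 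To repair your argument you should either reorganize the induction to peel off the last step (as the paper does) rather than an interior one, or cite explicitly the result in \cite{tjp2} that upgrades surjectivity of $\pi_{0,s}$ to surjectivity of every intermediate $\pi_{j-1,j}$; Remark~\ref{xx} alone only gives the forward implication and cannot be ``read in reverse'' in the way you suggest.
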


\begin{proof}
    We prove by induction on $d$.
Suppose $d=2.$ Let $x_{1}\in I$ be superficial for $I.$ The Ratliff-Rush filtration with respect to $I$ behaves well mod $x_{1},$ thus, $\frac{\widetilde{I^{n}}+(x_{1})}{(x_{1})}=\frac{\widetilde{I^{n}+(x_{1})}}{(x_{1})}$ for all  $n \geq 1.$ Also $\widetilde{I^{n}}=I^{n}$ for all  $n \geq \rho(I)$. Therefore, $\frac{I^{n}+(x_{1})}{(x_{1})}=\frac{\widetilde{I^{n}+(x_{1})}}{(x_{1})}$ for all  $n \geq \rho(I),$ which implies $\rho\left(I/(x)\right) \leq \rho(I).$
 We assume the lemma for $d-2$ and prove for $d-1.$
    Since the Ratliff-Rush filtration with respect to $I$ behaves well mod the superficial sequence $x_{1},x_{2},\ldots,x_{d-1} \in I,$ then 
    $\pi_{d-2,d-1}$ is surjective. Therefore, $\pi_{d-2,d-1}\left(\widetilde{I^{n}R_{d-2}}\right)=\widetilde{I^{n}R_{d-1}}$ for all $n \geq 1.$ Also $I^{n}R_{d-2}=\widetilde{I^{n}R_{d-2}}$ for all $n \geq \rho\left(I/(x_{1}, x_{2}, \ldots, x_{d-2})\right).$ Thus,  $I^{n}R_{d-1}=\widetilde{I^{n}R_{d-1}}$ for all $n \geq \rho\left(I/(x_{1}, x_{2}, \ldots, x_{d-2})\right).$ Therefore, $\rho\left(I/(x_{1}, x_{2}, \ldots, x_{d-2})\right) \geq \rho\left(I/(x_{1}, x_{2}, \ldots, x_{d-1})\right).$
 \end{proof}

 Now we generalize \cite[Theorem 2.3]{tjm}  by relaxing the assumption on the depth of the associated graded ring.

 \begin{theorem} \label{lm}
      Let $(R, \mathfrak m)$ be a Cohen-Macaulay local ring of dimension $d \geq 2,$ $I$ an $\mathfrak m$-primary ideal. Suppose that the Ratliff-Rush filtration with respect to $I$ behaves well mod a superficial sequence $x_{1},x_{2},\ldots,x_{d-2} \in I.$ Then for all $0 \leq i \leq d$ and for all  $n \geq \rho(I)$

      \begin{center}
          $(-1)^{d-i} \Delta^{i}(P_{I}(n)-H_{I}(n)) \geq 0.$
      \end{center}
 \end{theorem}

 \begin{proof}
   
Note that it suffices to prove the case $i=d.$ Let $g(n)=P_{I}(n)-H_{I}(n).$ 
Suppose $(-1)^{d-i} \Delta^{i}(g(n)) \geq 0$ for all  $n \geq \rho(I)$ and for some $i>0.$ Then  $(-1)^{d-i} \Delta^{i}(g(n))  = \Delta^{1}[(-1)^{d-i} \Delta^{i-1}(g(n))] \geq 0$ for all  $n \geq \rho(I).$ However, $g(n)=0$ for sufficiently large $n$ so  $(-1)^{d-i} \Delta^{i-1}(g(n))=0$  for sufficiently large $n.$ By the property of difference function we have $ (-1)^{d-i} \Delta^{i-1}(g(n))  \leq 0$ for all $n \geq \rho(I).$ 
Therefore, $ (-1)^{d-(i-1)} \Delta^{i-1}(g(n))  \geq 0$ for all  $n \geq \rho(I),$ which proves the theorem for $i-1.$ Hence it is enough to prove  $ \Delta^{d}(P_{I}(n)-H_{I}(n)) \geq 0$ for all  $n \geq \rho(I).$

 We prove this by induction on $d$.

 Case-1: Suppose $d=2.$   Let $x\in I$ be a superficial element for $I.$ Set $S=R/(x)$ and $I'=I/(x),$ then $S$ is a one dimensional Cohen-Macaulay ring.
By Lemma \ref{pushpa} and Proposition \ref{riya}, we have
$H_{I'}(n+1)=H_{I}(n+1)-H_{I}(n)$ for all $n \geq \rho(I),$
    and
$P_{I'}(n+1)=P_{I}(n+1)-P_{I}(n)$ for all $n.$

  Consider 
  \begin{equation*}
\begin{split}
    \Delta^{2}(P_{I}(n)-H_{I}(n)) & =\Delta^{1}[\Delta^{1}(P_{I}(n)-H_{I}(n))]\\
    & = \Delta^{1}[(P_{I}(n+1)-P_{I}(n))-(H_{I}(n+1)-H_{I}(n))]\\
    & = \text{$\Delta^{1}[P_{I'}(n+1)-H_{I'}(n+1)]$  for all $n \geq\rho(I)$}\\
    & \geq \text{0 for all $n \geq\rho(I)$}.
\end{split}
\end{equation*}
Case-2: Suppose $d >2.$ Set $S=R/(x_{1})$ and 
$I_{1}=I/(x_{1})$. Since the Ratliff-Rush filtration of $I$ behaves well mod a superficial sequence $\underline{\mathbf{x}}=x_{1},x_{2},\ldots,x_{d-2} \in I ,$  by Lemma \ref{fd}, we have  $\rho(I) \geq \rho\left(I_{1}\right) \geq \ldots \geq \rho\left(I/(\underline{\mathbf{x}})\right).$ We assume $\Delta^{d-1}(P_{I_{1}}(n)-H_{I_{1}}(n)) \geq 0$ for all $ n \geq \rho(I_{1}).$

  Consider

   \begin{equation*}
\begin{split}
    \Delta^{d}(P_{I}(n)-H_{I}(n)) & =\Delta^{d-1}[\Delta^{1}(P_{I}(n)-H_{I}(n))]\\
    & = \Delta^{d-1}[(P_{I}(n+1)-P_{I}(n))-(H_{I}(n+1)-H_{I}(n))]\\
    & = \text{$\Delta^{d-1}[P_{I_{1}}(n+1)-H_{I_{1}}(n+1)]$  for all $n \geq\rho(I)$}\\
    & \geq \text{0 for all $n \geq\rho(I)-1$}.
\end{split}
 \end{equation*}
  Therefore, we have $ \Delta^{d}(P_{I}(n)-H_{I}(n)) \geq 0$ for all $n \geq\rho(I).$
 \end{proof}

The following corollary partially answers Marley's Question \ref{dc}  in the affirmative.
 \begin{corollary}\label{fg}
     Let $(R, \mathfrak m)$ be a Cohen-Macaulay local ring of dimension $d \geq 2,$ $I$ an $\mathfrak m$-primary ideal. Suppose that the Ratliff-Rush filtration with respect to $I$ behaves well mod a superficial sequence $x_{1},x_{2},\ldots,x_{d-2} \in I.$ If $P_{I}(k)=H_{I}(k)$ for some $k \geq \rho(I)$, then $P_{I}(n)=H_{I}(n)$ for all $n \geq k.$
 \end{corollary}

 \begin{proof}
     By Proposition \ref{lm}, we have 

    \begin{center}
        $ (-1)^{d-1}\Delta^{1}(P_{I}(n)-H_{I}(n)) \geq 0$ for all  $n \geq \rho(I).$
    \end{center}
    Hence
 \begin{center}
        $ (-1)^{d-1}(P_{I}(n+1)-H_{I}(n+1)) \geq (-1)^{d-1}(P_{I}(n)-H_{I}(n))$ for all  $n \geq \rho(I).$
    \end{center}
Also $P_{I}(n)-H_{I}(n)=0$ for sufficiently large $n.$  Thus we get

\begin{center}
     $ 0 \geq (-1)^{d-1}(P_{I}(n)-H_{I}(n)) \geq (-1)^{d-1}(P_{I}(k)-H_{I}(k))$ for all  $n \geq k.$
\end{center}
    Therefore, if $P_{I}(k)=H_{I}(k)$ for some $k \geq \rho(I),$ then $P_{I}(n)=H_{I}(n)$ for all $n \geq k.$
 \end{proof}

Consequently, as a corollary we obtain Marley's result.

\begin{corollary}\label{vv}
    Let $(R, \mathfrak m)$ be a Cohen-Macaulay local ring of dimension $d \geq 2,$ $I$ an $\mathfrak m$-primary ideal such that $\depth \gr(I) \geq d-1.$ Then for all $0 \leq i \leq d$ and for all  $n \geq 1$

      \begin{center}
          $(-1)^{d-i} \Delta^{i}(P_{I}(n)-H_{I}(n)) \geq 0.$
      \end{center}
\end{corollary}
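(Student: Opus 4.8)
The plan is to obtain Corollary~\ref{vv} as a direct specialization of Theorem~\ref{lm}: the hypothesis $\depth\gr(I)\geq d-1$ should force $\rho(I)=1$ and should guarantee that the Ratliff-Rush filtration with respect to $I$ behaves well mod a superficial sequence of length $d-2$, after which Theorem~\ref{lm} applies verbatim (when $d=2$ the superficial sequence is empty and this good behaviour is vacuous).

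First I would observe that $\depth\gr(I)\geq d-1\geq 1$, so by \cite[1.3]{hjls} we have $\widetilde{I^{n}}=I^{n}$ for all $n\geq 1$; equivalently $\rho(I)=1$. Next, using that the residue field is infinite, pick a superficial sequence $x_{1},\ldots,x_{d-2}\in I$. Since the depth of the associated graded ring is at least one, the initial form of a superficial element is a nonzerodivisor on the associated graded ring and passes to the associated graded ring of the quotient; iterating this (Sally's descent) gives $\depth\gr\big(I/(x_{1},\ldots,x_{i})\big)\geq d-1-i$ for $0\leq i\leq d-2$. In particular each of the rings $R_{i}=R/(x_{1},\ldots,x_{i})$ with $i\leq d-2$ has $\depth\gr(IR_{i})>0$, so by \cite[1.3]{hjls} again $\widetilde{I^{n}R_{i}}=I^{n}R_{i}$ for all $n\geq 1$.

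From this the good behaviour is immediate: for every $n\geq 1$,
\[
\pi_{0,d-2}\big(\widetilde{I^{n}}\big)=\pi_{0,d-2}(I^{n})=I^{n}R_{d-2}=\widetilde{I^{n}R_{d-2}},
\]
so the Ratliff-Rush filtration with respect to $I$ behaves well mod the superficial sequence $x_{1},\ldots,x_{d-2}$. Theorem~\ref{lm} now yields $(-1)^{d-i}\Delta^{i}(P_{I}(n)-H_{I}(n))\geq 0$ for all $0\leq i\leq d$ and all $n\geq\rho(I)=1$, which is exactly the statement.

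The only step that is not purely formal is the depth descent in the second paragraph, i.e.\ the fact that going modulo a superficial element lowers $\depth\gr$ by exactly one whenever that depth is positive, together with its iteration down the chain $R\to R_{1}\to\cdots\to R_{d-2}$; I expect this (standard) point to be the main thing to verify carefully, while everything else is a formal consequence of Theorem~\ref{lm} and of the characterization $\depth\gr(-)>0\Leftrightarrow\rho(-)=1$.
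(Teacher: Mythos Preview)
Your proposal is correct and follows essentially the same route as the paper's proof: both use the depth descent $\depth\gr(IR_{i})\geq d-1-i>0$ for $i\leq d-2$ together with \cite[1.3]{hjls} to obtain $\rho(I)=\rho(IR_{1})=\cdots=\rho(IR_{d-2})=1$, which makes the good behaviour of the Ratliff-Rush filtration trivially satisfied, and then invoke Theorem~\ref{lm} with $\rho(I)=1$. The paper's proof is terser (it simply cites ``the proof of~\ref{lm} and \cite[1.3]{hjls}'' and records the equalities $\rho(IR_{i})=1$), while you spell out the verification of the hypothesis of Theorem~\ref{lm} explicitly; the content is the same.
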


\begin{proof}

Since $\depth \gr(I) \geq d-1,$ from \cite[1.3]{hjls}, we have
\begin{center}
   $\rho(I)= \rho\left(I/(x_{1})\right) = \ldots = \rho\left(I/(x_{1}, x_{2}, \ldots, x_{d-2})\right)=1.$  
\end{center}
Thus, from Theorem \ref{lm},   $\Delta^{d}(P_{I}(n)-H_{I}(n)) \geq 0$ for all $n \geq 1.$
\end{proof}

\section*{Acknowledgement}
The authors sincerely thank the anonymous referee  for meticulous reading the manuscript. The authors also express their gratitude to Prof. T. J. Puthenpurakal, Prof. C. D'Cruz  and Prof. R. Nanduri for their helpful discussions and valuable suggestions. Additionally,  the second author  acknowledges the support of the Government of India through the Prime Minister's Research Fellowship during this work.

\end{document}